\theoremstyle{plain}
\newtheorem{thm}{\protect\theoremname}
\theoremstyle{remark}
\newtheorem*{rem*}{\protect\remarkname}
\theoremstyle{remark}
\newtheorem*{acknowledgement*}{\protect\acknowledgementname}
\theoremstyle{definition}
\newtheorem{defn}[thm]{\protect\definitionname}
\theoremstyle{remark}
\newtheorem{note}[thm]{\protect\notename}
\theoremstyle{plain}
\newtheorem{fact}[thm]{\protect\factname}
\theoremstyle{plain}
\newtheorem{lem}[thm]{\protect\lemmaname}
\theoremstyle{remark}
\newtheorem{rem}[thm]{\protect\remarkname}
\theoremstyle{plain}
\newtheorem{prop}[thm]{\protect\propositionname}
\theoremstyle{remark}
\newtheorem{notation}[thm]{\protect\notationname}
\date {}
\providecommand{\acknowledgementname}{Acknowledgement}
\providecommand{\definitionname}{Definition}
\providecommand{\factname}{Fact}
\providecommand{\lemmaname}{Lemma}
\providecommand{\notationname}{Notation}
\providecommand{\notename}{Note}
\providecommand{\propositionname}{Proposition}
\providecommand{\remarkname}{Remark}
\providecommand{\theoremname}{Theorem}
\begin{document}
\global\long\def\RR{\mathbb{R}}%
\global\long\def\CC{\mathbb{C}}%
\global\long\def\HH{\mathbb{H}}%
\global\long\def\NN{\mathbb{N}}%
\global\long\def\ZZ{\mathbb{Z}}%
\global\long\def\QQ{\mathbb{Q}}%
\global\long\def\FF{\mathbb{F}}%
\global\long\def\KK{\mathbb{K}}%
\global\long\def\th{\theta}%
\global\long\def\e{\epsilon}%
\global\long\def\a{\alpha}%
\global\long\def\b{\beta}%
\global\long\def\ga{\gamma}%
\global\long\def\ph{\varphi}%
\global\long\def\om{\omega}%
\global\long\def\lm{\lambda}%
\global\long\def\dl{\delta}%
\global\long\def\little{\varepsilon}%
\global\long\def\gam{\Gamma}%
\global\long\def\lra{\longrightarrow}%
\global\long\def\del{\partial}%
\global\long\def\half{\frac{1}{2}}%
\global\long\def\exd#1#2{\underset{#2}{\underbrace{#1}}}%
\global\long\def\exup#1#2{\overset{#2}{\overbrace{#1}}}%
\global\long\def\diffeo{\simeq}%
\global\long\def\mtx{T}%
\global\long\def\algrp{\mathbf{G}}%
\global\long\def\transpose{\mbox{t}}%
\global\long\def\norm#1{\left\Vert #1\right\Vert }%
\global\long\def\normp#1{\left\Vert #1\right\Vert _{p}}%
\global\long\def\abs#1{\left|#1\right|}%
\global\long\def\absp#1{\left|#1\right|_{p}}%
\global\long\def\dir#1{\hat{#1}}%
\global\long\def\dirp#1{\check{#1}}%
\global\long\def\measp{\mu_{p}}%
\global\long\def\id{\operatorname{id}}%
\global\long\def\idmat#1{\operatorname{I}_{#1}}%
\global\long\def\gl#1{\operatorname{GL}_{#1}}%
\global\long\def\sl#1{\operatorname{SL}_{#1}}%
\global\long\def\so#1{\operatorname{SO}_{#1}}%
\global\long\def\ort#1{\operatorname{O}_{#1}}%
\global\long\def\pgl#1{\operatorname{PGL}_{#1}}%
\global\long\def\po#1{\operatorname{PO}_{#1}}%
\global\long\def\leb{\operatorname{Leb}}%
\global\long\def\diag{\operatorname{diag}}%
\global\long\def\prim{\operatorname{prim}}%
\global\long\def\Ad#1{\operatorname{Ad}_{#1}}%
\global\long\def\sphere#1{\mathbb{S}^{#1}}%
\global\long\def\arc{\Theta}%
\global\long\def\arcp{\Theta_{p}}%
\global\long\def\nbhd#1#2{\mathcal{O}_{#1}^{#2}}%
\global\long\def\ball#1{B_{#1}}%
\global\long\def\BIcomp{H}%
\global\long\def\Gset{\mathcal{B}}%
\global\long\def\Mat#1{\text{Mat}_{#1}}%
\global\long\def\Nset{\Psi}%
\global\long\def\val{\nu}%
\global\long\def\intdom{\mathcal{O}}%
\global\long\def\leftgrp{Q}%
\global\long\def\errexp{\tau}%

\title{$p$-adic Directions of Primitive Vectors}
\author{Antonin Guilloux\thanks{IMJ-PRG, OURAGAN, Sorbonne Universit\'e, CNRS, INRIA, \texttt{antonin.guilloux@imj-prg.fr}.}\and
Tal Horesh \thanks{IST Austria, \texttt{tal.horesh@ist.ac.at}. Supported by EPRSC grant EP/P026710/1.}
\and }

\maketitle

\begin{abstract}
Linnik type problems concern the distribution of projections of integral
points on the unit sphere as their norm increases, and different generalizations
of this phenomenon. Our work addresses a question of this type: we
prove the uniform distribution of the projections of primitive $\ZZ^{2}$
points in the $p$-adic unit sphere, as their (real) norm tends to
infinity. The proof is via counting lattice points in semi-simple
$S$-arithmetic groups. 
\end{abstract}

A primitive vector is an $n$-tuple $\left(a_{1},\ldots,a_{n}\right)$
of co-prime integers, and we let $\ZZ_{\prim}^{n}$ denote the set
of primitive vectors in $\ZZ^{n}$. Since every integral vector is
an integer multiple of a unique primitive vector, it is very natural
to restrict questions about equidistribution of integer vectors to
the set of primitive vectors. For example, one question about an equidistribution
property for integer vectors that has been studied in the past (e.g.\
in \cite{Schmidt_98}) is whether the directions of integral vectors,
i.e. their projections to the unit sphere in $\RR^{n}$, distribute
uniformly in the unit sphere as their norm tends to $\infty$. This
question belongs to the well known family of \emph{Linnik type problems}
(e.g. \cite{Linnik_68,Duke_03,Duke_07}), and the answer is positive:
for every ``reasonable'' subset $\arc$ of the sphere, it holds
that 
\begin{equation}
\frac{\#\left\{ \frac{v}{\norm v}\in\arc:v\in\ZZ^{n},\norm v\leq R\right\} }{\#\left\{ \frac{v}{\norm v}\in\sphere{n-1}:v\in\ZZ^{n},\norm v\leq R\right\} }\underset{R\to\infty}{\lra}\leb(\arc),\label{eq: equidistribution}
\end{equation}
where $\leb$ is the Lebesgue measure on the sphere. While in the
above quotient every ``integral direction'' on the unit sphere is
hit several times (the first time for a primitive vector, and then
another time for each one of its integer multiples), restricting to
$v\in\ZZ_{\prim}^{n}$ allows every integral direction to be considered
exactly once.

Questions about equidistribution of directions, as well as of other
parameters of primitive vectors, have been studied recently using
dynamical methods in \cite{Marklof_10,AES_16A,AES_16B,EMSS_16,ERW17}.
In the present paper we restrict to dimension $n=2$, and study the
equidistribution of \emph{$p$-adic directions} of primitive vectors.
Indeed, since primitive vectors have integer coordinates, they can
be seen as vectors over any field that contains the rationals, and
in particular over the field of $p$-adic numbers $\QQ_{p}$ for a
positive prime number $p$. There, just like the direction of a real
vector is its projection to the (real) unit sphere through multiplication
by inverse of the norm, the $p$-adic direction of a vector is its
projection to the $p$-adic unit sphere. However, the primitive vectors
have $p$-adic norm one, so in fact they are already contained in
the $p$-adic unit sphere $\sphere 1_{p}$ (we will observe this below,
where we recall some basic definitions in the $p$-adic setting).
So, $\ZZ_{\prim}^{2}$ is a countable subset of $\sphere 1_{p}$ which
is equipped with a natural height function: the real norm. One is
then led to ask whether the set $\ZZ_{\prim}^{2}$ equidistributes
in $\sphere 1_{p}$, i.e., if an analog to (\ref{eq: equidistribution})
holds when $\ZZ^{n}$ is replaced by $\ZZ_{\prim}^{2}$, and $\sphere 1$
is replaced by $\sphere 1_{p}$. To formulate such an analog, we need
to declare what are the analogous objects in $\sphere 1_{p}$ for
an arc on the (real) unit circle, and for the Lebesgue measure on
it. Below, we will define the concept of a $p$-adic arc, $\arcp\subset\sphere 1_{p}$,
and recall a Haar measure $\mu_{p}$ on $\QQ_{p}$. Since $\sphere 1_{p}$
is an open and compact subset of the $p$-adic plane, then the restriction
of the Haar measure $\mu_{p}^{2}$ on the plane $\QQ_{p}^{2}$ to
$\sphere 1_{p}$ is a finite non-zero measure on $\sphere 1_{p}$.
As we shall see below, it corresponds to the Lebesgue measure on the
real unit circle. 

The theorem below establishes the uniform distribution of the primitive
vectors in $\sphere 1_{p}$ as their real norm tends to infinity;
even more, it establishes \emph{joint equidistribution} of their real
and $p$-adic directions in the product of unit circles $\sphere 1\times\sphere 1_{p}$. 
\begin{thm}
\label{thm: A ed of directions}For $v\in\ZZ_{\prim}^{2}$, the pairs
of real and $p$-adic directions
\[
\left(\frac{v}{\norm v},v\right)\in\sphere 1\times\sphere 1_{p}
\]
become uniformly distributed in $\sphere 1\times\sphere 1_{p}$ w.r.t.\
$\leb\times\measp^{2}|_{\sphere 1_{p}}$ as $\norm v\to\infty$, meaning
that for every product of arcs $\arc\times\arcp\subset\sphere 1\times\sphere 1_{p}$
it holds that 
\[
\frac{\#\left\{ v\in\ZZ_{\prim}^{2}:\,\left(\frac{v}{\norm v},v\right)\in\arc\times\arcp,\quad\norm v\leq R\right\} }{\#\left\{ v\in\ZZ_{\prim}^{2}:\quad\norm v\leq R\right\} }\underset{R\to\infty}{\lra}\leb(\arc)\cdot\measp^{2}(\arcp).
\]
The convergence is at rate at most $O\left(R^{-2\errexp_{p}+\dl}\right)$
for every $\dl>0$, where $\errexp_{p}=\frac{1}{28}$. 
\end{thm}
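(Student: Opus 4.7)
I would encode $\ZZ_{\prim}^{2}$ as an orbit of an $S$-arithmetic lattice and reduce Theorem~\ref{thm: A ed of directions} to counting lattice points in a sector. Set $G_{S}=\sl 2(\RR)\times\sl 2(\QQ_{p})$ and $\gam_{S}=\sl 2(\ZZ[1/p])$, diagonally embedded as a lattice in $G_{S}$. Since $\gam_{S}$ acts transitively on $\ZZ[1/p]^{2}_{\prim}$ with stabilizer of $e_{1}=(1,0)^{\transpose}$ equal to the upper unipotent subgroup $U(\ZZ[1/p])$, and since $\ZZ_{\prim}^{2}=\ZZ[1/p]^{2}_{\prim}\cap\ZZ_{p}^{2}$, every $v\in\ZZ_{\prim}^{2}$ corresponds to a unique coset $\gamma\,U(\ZZ[1/p])$ whose $p$-adic component sends $e_{1}$ into $\ZZ_{p}^{2}$. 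Via the Iwasawa decompositions of $\sl 2(\RR)$ and $\sl 2(\QQ_{p})$ (with maximal compacts $\so 2$ and $K_{p}=\sl 2(\ZZ_{p})$), the joint constraints ``$v/\norm v\in\arc$, $\norm v\le R$, $v\in\arcp$'' translate into the condition $\gamma\in\Omega_{R}$ for an explicit right-$U$-invariant set
\[
\Omega_{R}=(K_{\arc}\,A_{R}^{+}\,U(\RR))\times(K_{\arcp}\,U(\QQ_{p})),
\]
where $A_{R}^{+}=\{\diag(\lm,\lm^{-1}):1\le\lm\le R\}$, $K_{\arc}\subset\so 2$ is the preimage of $\arc$ under $k\mapsto ke_{1}$, and $K_{\arcp}\subset K_{p}$ is the preimage of $\arcp$ analogously.

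\textbf{Counting via an effective mean ergodic theorem.} The target count becomes $\#((\gam_{S}\cap\Omega_{R})/U(\ZZ[1/p]))$, which I would analyze through the effective lattice-point machinery (in the spirit of Eskin--McMullen and Gorodnik--Nevo) for the homogeneous space $\gam_{S}\backslash G_{S}/H$ with $H=U(\RR)\times U(\QQ_{p})$. The steps are: (i) smooth $\Omega_{R}$ slightly in both the archimedean and $p$-adic factors; (ii) rewrite the smoothed count as the integral of a $\gam_{S}$-periodized bump against the constant function on $\gam_{S}\backslash G_{S}$; (iii) invoke quantitative equidistribution of the compact orbit $(H\cap\gam_{S})\backslash H$ inside $\gam_{S}\backslash G_{S}$, which is compact because $U(\ZZ[1/p])\cong\ZZ[1/p]$ is a cocompact lattice in $\RR\times\QQ_{p}$. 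By Iwasawa, $\mathrm{vol}(\Omega_{R})$ is proportional to $R^{2}\cdot\leb(\arc)\cdot\measp^{2}(\arcp)$, so dividing by the corresponding count without arc constraints yields the claimed ratio once one normalizes the total masses on $\sphere 1$ and $\sphere 1_{p}$ to $1$.

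\textbf{Main obstacle.} The error exponent $\errexp_{p}=1/28$ will come from feeding the best available bound toward the Ramanujan conjecture for $\sl 2(\ZZ[1/p])$ (Kim--Sarnak's $7/64$ at the archimedean and $p$-adic places) into the decay of matrix coefficients on $\gam_{S}\backslash G_{S}$. The delicate point is to package this spectral input into an effective equidistribution statement for $H$-orbits and simultaneously verify well-roundedness of the family $\{\Omega_{R}\}$ under small perturbations in $G_{S}$, so that the smoothing error and the spectral main term can be balanced. An additional subtlety is the asymmetric geometry of $\Omega_{R}$: it grows polynomially only in the archimedean $A^{+}$-direction while being compact in the $p$-adic direction, which forces the two smoothing parameters to be scaled differently against $R$ when optimizing the error, and is where I expect the bulk of the technical work to sit.
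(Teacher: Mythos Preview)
Your overall plan is sound and leads to the same conclusion, but it differs from the paper's argument in one structural respect that is worth flagging.

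\textbf{Where you diverge.} You propose to count cosets $\gamma\,U(\ZZ[1/p])$ and to run the orbit-method/Eskin--McMullen machinery on $\gam_{S}\backslash G_{S}/H$ with $H=U(\RR)\times U(\QQ_{p})$. The paper instead eliminates the quotient by $U(\ZZ[1/p])$ at the outset: it fixes the fundamental domain $\mathcal{D}=[-\tfrac12,\tfrac12)\times\ZZ_{p}$ for $\ZZ[1/p]$ in $\RR\times\QQ_{p}$, so that each primitive vector $v$ corresponds to a \emph{single} matrix $\gamma_{v,\mathcal{D}}\in\sl 2(\ZZ[1/p])$ with $N$-component in $N_{\mathcal{D}}$. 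The count then becomes a straight lattice-point count for $\sl 2(\ZZ[1/p])$ in the bounded family $Q_{\arc,\arcp}A_{R,0,0}N_{\mathcal{D}}\subset G_{S}$, to which the Gorodnik--Nevo theorem applies directly once Lipschitz well-roundedness is checked. This buys a shorter pipeline (no separate equidistribution of $H$-orbit translates) at the cost of verifying well-roundedness of a set that is genuinely bounded in all directions. Your route is perfectly viable and standard; it just front-loads more analysis on the homogeneous space.

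\textbf{Two technical points you should adjust.} First, the Iwasawa decomposition $K_{p}A_{p}N_{p}$ of $\sl 2(\QQ_{p})$ is not unique (since $K_{p}\cap N_{p}\neq\{1\}$), so ``the preimage of $\arcp$ in $K_{p}$'' is ambiguous as stated; the paper handles this by restricting to the half-plane $|a|_{p}\ge|b|_{p}$ and using Bruhat coordinates $Q_{p}A_{p}N_{p}$ with $Q_{p}$ the lower-triangular part of $K_{p}$, where the decomposition \emph{is} unique and matches Iwasawa. You will need an analogous normalization. Second, the ``main obstacle'' you anticipate about asymmetric smoothing largely evaporates: because the $p$-adic target set is a ball and the metric is ultrametric, small two-sided perturbations leave it literally unchanged, so the $p$-adic Lipschitz constant in the well-roundedness estimate is zero. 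All the balancing of smoothing against spectral error happens in the archimedean factor alone. Relatedly, the exponent $\errexp_{p}=1/28$ in the paper comes from the Gorodnik--Nevo bound $1/(4(1+\dim G_{S}))=1/28$, which only needs \emph{some} bound toward Ramanujan (Gelbart--Jacquet already suffices); Kim--Sarnak does not improve it, and the full Ramanujan conjecture would raise it to $1/14$.
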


\begin{rem*}
The parameter $\errexp_{p}$ in error term exponent depends on the
rate of decay of the matrix coefficients of automorphic representations
of $\sl 2\left(\RR\right)\times\sl 2\left(\QQ_{p}\right)$, and can
be improved to $\frac{1}{14}$ when assuming the Ramanujan conjecture
(cf.\ Remark \ref{rem: err exponent}).
\end{rem*}
A similar theorem holds when considering a finite set of primes instead
of a single $p$. Theorem \ref{thm: Equidistribution Bruhat-Iwasawa}
in Section \ref{sec:Equidistribution-of-Iwasawa} states a related
equidistribution result in this $S$-arithmetic setting, that translates
into a counting statement as above, see Remark \ref{rem: S-arithmetic prim vectors}.
However, we have chosen to deal with only one prime here in order
to ease the exposition.

\paragraph*{Organization of the paper. }

The first section of the paper is a collection of general facts on
$p$-adic numbers and arithmetic lattices. Sections \ref{sec:Counting-lattice-points}
and \ref{sec:Counting-lattice-points-1} are devoted to the proof
of Theorem \ref{thm: A ed of directions}, along the lines of the
proof given in \cite{HN_Counting,HK_gcd} for the uniform distribution
of the real directions of primitive vectors in the unit sphere. It
consists of two stages: the first is a translation of the theorem
to a statement about counting lattice points in the group $\sl 2\left(\RR\right)\times\sl 2\left(\QQ_{p}\right)$
(Section \ref{sec:Counting-lattice-points}), and the second is proving
the counting statement via a method developed in \cite{GN12} (Section
\ref{sec:Counting-lattice-points-1}). In Section \ref{sec:Equidistribution-of-Iwasawa},
we show that the same method gives a stronger statement than the one
in Theorem \ref{thm: A ed of directions}, namely the joint equidistribution
of real and $p$-adic directions for any finite number of primes $p$
--- cf.\ Theorem \ref{thm: Equidistribution Bruhat-Iwasawa} (and
Remark \ref{rem: S-arithmetic prim vectors}). Section \ref{sec: Proof of p-adic LWR}
is devoted to proving a technical result on well-roundedness in the
$p$-adic setting that is used in Section \ref{sec:Equidistribution-of-Iwasawa}.
\begin{acknowledgement*}
The authors are grateful to Nicolas Bergeron and Fr\'ed\'eric Paulin for
helpful discussions, and to Tim Browning for his valuable comments
on a preliminary version of the preprint.
\end{acknowledgement*}

\section{$p$-adic numbers and arithmetic lattices}

Let us now recall some basic facts on the $p$-adic numbers and arithmetic
lattices.

\subsection{$p$-adic numbers and vector spaces}
\begin{defn}[\textbf{$p$-adic valuation and absolute value}]
For a non-zero $p$-adic number $a$, the \emph{$p$-adic valuation}
of $a$ is defined to be the biggest integer $\val\left(a\right)$
such that 
\[
a=\sum_{i=\val\left(a\right)}^{\infty}\a_{i}p^{i},\quad\a_{i}\in\left\{ 0,1,\ldots,p-1\right\} ,
\]
where for $a=0$ one defines $\val\left(0\right)=\infty$. The \emph{$p$-adic
absolute value} is given by 
\[
\absp a=p^{-\val\left(a\right)}.
\]
The ring of \emph{$p$-adic integers} $\ZZ_{p}<\QQ_{p}$ is the $p$-adic
unit ball, namely the set of $p$-adic numbers with absolute value
at most $1$ (equivalently, of non-negative valuation). Inside $\ZZ_{p}$,
the set of \emph{invertible} $p$-adic integers $\ZZ_{p}^{*}\subset\ZZ_{p}$
is the set of $p$-adic numbers with absolute value $1$ (equivalently,
of valuation $0$).
\end{defn}

\begin{note}
\label{note: representation of p-adic number }It is easy to see that
every $a\in\QQ_{p}$ can be written (uniquely) as 
\[
a=u_{a}p^{\val\left(a\right)}=u_{a}\left|a\right|_{p}^{-1}
\]
where $u_{a}\in\ZZ_{p}^{\times}$.
\end{note}

A norm on $\QQ_{p}^{2}$ (and therefore a unit circle) is then defined
as follows. 
\begin{defn}[\textbf{norm and unit circle in $\QQ_{p}^{2}$}]
\label{def: p-adic norm}The $p$-adic norm (or just ``norm'')
of a vector $(a,b)\in\QQ_{p}^{2}$ is defined to be
\[
\normp{(a,b)}=\max\left\{ \absp a,\absp b\right\} =p^{-\min\left\{ \val\left(a\right),\val\left(b\right)\right\} }.
\]
Accordingly, the \emph{$p$-adic unit circle} in $\QQ_{p}^{2}$ is
the set of vectors of norm one:
\[
\sphere 1_{p}:=\left\{ v\in\QQ_{p}^{2}:\normp v=1\right\} .
\]
\end{defn}

Note  that $(a,b)$ is in $\sphere 1_{p}$ if and only if both $a,b$
are in $\ZZ_{p}$, and at least one of them is in $\ZZ_{p}^{*}$.
In particular, since the ``usual'' integers are also $p$-adic integers,
and since an integer lies in $\ZZ_{p}^{*}$ if and only if it is
not divisible by $p$, we have that $\ZZ_{\prim}^{2}\subset\sphere 1_{p}$.

Since $\ZZ_{p}$ is the unit ball in $\QQ_{p}$, then a ball of radius
$p^{-N}$ around $\a\in\QQ_{p}$ is $\a+p^{N}\ZZ_{p}$. Similarly,
a ball of radius $p^{-N}$ around $(\a,\b)\in\QQ_{p}^{2}$ is $(\a,\b)+p^{N}\ZZ_{p}^{2}$.
The analog in $\QQ_{p}^{2}$ for an arc $\arc$ in the unit circle
$\sphere 1$ is a ball that is contained in the $p$-adic circle:
\begin{defn}[\textbf{$p$-adic arc}]
 Let $N>0$ be an integer. A \emph{$p$-adic arc} of radius $p^{-N}$
is a ball $\th+p^{N}\ZZ_{p}^{2}$, where $\th\in\sphere 1_{p}$. 
It will be denoted by $\arcp=\arc_{p}\left(\th,p^{-N}\right)$. 
\end{defn}

Note that it is sufficient that $\th\in\sphere 1_{p}$ in order to
have $\arcp\subset\sphere 1_{p}$!

A Haar measure $\mu_{p}$ is defined on $\QQ_{p}$ (see, e.g.\ \cite{BS05})
by assigning to a ball of radius $p^{-N}$ the volume $p^{-N}$. We
let $\measp^{2}:=\measp\times\measp$ denote the resulting Haar measure
on $\QQ_{p}^{2}$, which then assigns to a ball of radius $p^{-N}$
the volume $p^{-2N}$. Being a Haar measure on $\QQ_{p}^{2}$, $\measp^{2}$
is invariant under the group $\sl 2(\QQ_{p})$ of $p$-adic $2$ by
$2$ matrices with determinant one. We note that unlike the real
case, in the $p$-adic plane the unit circle has positive Haar measure.
Indeed, it contains the subset $\ZZ_{p}^{\times}\times\ZZ_{p}$, which
has measure 
\[
\measp(\ZZ_{p}^{\times})\cdot\measp(\ZZ_{p})=\measp(\ZZ_{p}-p\ZZ_{p})\cdot\measp(\ZZ_{p})=(1-\frac{1}{p})\cdot1=1-\frac{1}{p}.
\]
Hence, it is possible to restrict $\measp^{2}$ to $\sphere 1_{p}$.
This measure is the analog of the Lebesgue measure on the real unit
circle, in the sense that it is invariant under the group of norm
preserving linear transformations of $\QQ_{p}^{2}$. Indeed, it is
known (e.g. \cite[Cor. 3.3]{GI63}) that the group $\sl 2(\ZZ_{p})$
is the stabilizer of the norm $\normp{\cdot}$ on $\QQ_{p}^{2}$,
so in particular it preserves and acts transitively on $\sphere 1_{p}$. 

\subsection{$S$-arithmetic lattices}

The field $\QQ_{p}$ does not contain a lattice; but, inside the ring
\[
\FF:=\RR\times\QQ_{p},
\]
the subring $\ZZ\left[\frac{1}{p}\right]$ of polynomials in $\frac{1}{p}$
with integer coefficients embeds diagonally as a co-compact lattice.
It is an integral domain, and as such it plays the role of the integral
lattice inside $\FF$. According to the Borel Harish--Chandra Theorem
\cite{Borel_HarishChandra}, the diagonal embedding of the subgroup
$\sl 2\left(\ZZ\left[\frac{1}{p}\right]\right)$, which by abuse of
notation we denote by $\sl 2\left(\ZZ\left[\frac{1}{p}\right]\right)$,
is a lattice inside 
\[
\sl 2\left(\FF\right):=\sl 2\left(\RR\right)\times\sl 2\left(\QQ_{p}\right).
\]
Both these lattices, $\ZZ\left[\frac{1}{p}\right]<\FF$ and $\sl 2\left(\ZZ\left[\frac{1}{p}\right]\right)<\sl 2\left(\FF\right)$,
are a special case of \emph{$S$-arithmetic lattices} \cite{PPR_92}.
To familiarize the reader with the $S$-arithmetic framework, we include
a proof that these two discrete subgroups are indeed lattices, by
establishing the existence of finite-volume fundamental domains. Here
a fundamental domain means  a full set of representatives. 

\begin{fact}
\label{fact: fund dom for SL2(Z=00005B1/p=00005D)}Let $\mathcal{D}_{\infty}$
denote a fundamental domain for $\sl 2\left(\ZZ\right)$ in $\sl 2\left(\RR\right)$.
Then
\begin{enumerate}
\item $\left[\half,\half\right)\times\ZZ_{p}$ is a fundamental domain for
$\ZZ\left[\frac{1}{p}\right]$ in $\FF$. 
\item $\mathcal{D}_{\infty}\times\sl 2\left(\ZZ_{p}\right)$ is a fundamental
domain for $\sl 2\left(\ZZ\left[\frac{1}{p}\right]\right)$ in $\sl 2\left(\FF\right)$.
\end{enumerate}
\end{fact}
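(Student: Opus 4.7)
The plan is to prove (1) by an explicit reduction procedure, and then bootstrap to (2) using an auxiliary Iwasawa-style factorization of $\sl 2(\QQ_{p})$.

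For (1), I would exploit the canonical splitting $y = y_{-} + y_{0}$ of any $y \in \QQ_{p}$ into its $p$-principal part $y_{-} := \sum_{\val(y) \le i < 0} \a_{i} p^{i} \in \ZZ[\tfrac{1}{p}]$ (a finite sum) and its integral part $y_{0} \in \ZZ_{p}$. Given $(x,y) \in \FF$, I would first subtract the diagonal $(y_{-}, y_{-}) \in \ZZ[\tfrac{1}{p}]$ to put the $p$-adic coordinate in $\ZZ_{p}$, and then shift by the integer $n := \lfloor x - y_{-} + \half \rfloor$ to bring the real coordinate into $[-\half, \half)$; since $n \in \ZZ \subset \ZZ_{p}$, this does not disturb the $p$-adic coordinate. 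Uniqueness rests on the identity $\ZZ[\tfrac{1}{p}] \cap \ZZ_{p} = \ZZ$ (any $m/p^{k} \in \QQ$ is a $p$-adic integer only if $p^{k} \mid m$): if $r_{1}, r_{2} \in \ZZ[\tfrac{1}{p}]$ both bring $(x,y)$ into the proposed domain, then $r_{1} - r_{2} \in \ZZ \cap (-1, 1) = \{0\}$. Finiteness of covolume is immediate: $\leb([-\half, \half)) \cdot \measp(\ZZ_{p}) = 1$.

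For (2), the crux is the ``baby strong approximation'' identity
\[
\sl 2(\QQ_{p}) = \sl 2(\ZZ[\tfrac{1}{p}]) \cdot \sl 2(\ZZ_{p}),
\]
which I would derive from the fact that $\sl 2$ over a field is generated by elementary unipotent matrices $u_{\pm}(t)$, together with the splitting $u_{\pm}(t) = u_{\pm}(t_{-}) u_{\pm}(t_{0})$ coming from (1), or alternatively from the transitive action of $\sl 2(\ZZ[\tfrac{1}{p}])$ on the Bruhat--Tits tree of $\sl 2(\QQ_{p})$, whose base vertex has stabilizer $\sl 2(\ZZ_{p})$. Granted this factorization, for $(g_{\infty}, g_{p}) \in \sl 2(\FF)$ I write $g_{p} = \ga_{0} k_{p}$ with $\ga_{0} \in \sl 2(\ZZ[\tfrac{1}{p}])$ and $k_{p} \in \sl 2(\ZZ_{p})$, then pick $\ga_{\infty} \in \sl 2(\ZZ)$ so that $\ga_{\infty}^{-1} \ga_{0}^{-1} g_{\infty} \in \mathcal{D}_{\infty}$; since $\sl 2(\ZZ) \subset \sl 2(\ZZ_{p})$, the $p$-adic coordinate remains in $\sl 2(\ZZ_{p})$, so $\ga_{0} \ga_{\infty}$ does the job.

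Uniqueness in (2) mirrors (1): if $\ga_{1}, \ga_{2} \in \sl 2(\ZZ[\tfrac{1}{p}])$ both work, then $\eta := \ga_{1}^{-1} \ga_{2}$ lies in $\sl 2(\ZZ[\tfrac{1}{p}]) \cap \sl 2(\ZZ_{p}) = \sl 2(\ZZ)$ on the $p$-adic side and in the $\sl 2(\ZZ)$-stabilizer of a point of $\mathcal{D}_{\infty}$ on the real side, hence trivial in the sense required for a set of representatives. Finite covolume follows from the classical finiteness of the volume of $\mathcal{D}_{\infty}$ and the compactness of $\sl 2(\ZZ_{p}) \subset \sl 2(\QQ_{p})$. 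The main obstacle is the factorization displayed above; once it is in hand the rest is routine bookkeeping across the two places, and the entire argument is essentially a product of the one-variable reduction in (1) with the classical $\sl 2(\ZZ)$-reduction at the real place.
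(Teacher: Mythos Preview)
Your argument tracks the paper's almost exactly: part (1) is identical (split off the $p$-adic principal part, then shift by an integer, with uniqueness via $\ZZ[\tfrac{1}{p}]\cap\ZZ_p=\ZZ$), and the architecture of part (2) is the same (factor $g_p$ across $\sl 2(\ZZ[\tfrac{1}{p}])$ and $\sl 2(\ZZ_p)$, then reduce the archimedean coordinate by an element of $\sl 2(\ZZ)$, with uniqueness again from $\sl 2(\ZZ[\tfrac{1}{p}])\cap\sl 2(\ZZ_p)=\sl 2(\ZZ)$). The paper acts on the right where you act on the left, but that is only a convention.

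The one genuine soft spot is your first proposed justification of the factorization $\sl 2(\QQ_p)=\sl 2(\ZZ[\tfrac{1}{p}])\cdot\sl 2(\ZZ_p)$. Knowing that each elementary unipotent splits as $u_\pm(t)=u_\pm(t_-)u_\pm(t_0)$ does \emph{not} by itself give $G=\Gamma K$: a product $(\ga_1 k_1)(\ga_2 k_2)$ lands in $\Gamma K$ only if you already know $K\Gamma\subseteq\Gamma K$, which is essentially the statement in question. (A clean repair in the same spirit: unipotent generation together with density of $\ZZ[\tfrac{1}{p}]$ in $\QQ_p$ shows $\Gamma$ is dense in $\sl 2(\QQ_p)$; since $K=\sl 2(\ZZ_p)$ is open, every coset $gK$ meets $\Gamma$, hence $G=\Gamma K$.) Your alternative via transitivity of $\sl 2(\ZZ[\tfrac{1}{p}])$ on the vertices of the Bruhat--Tits tree with stabilizer $\sl 2(\ZZ_p)$ is correct and delivers exactly what is needed. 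The paper's own route is the most elementary of the three: a direct row reduction over the local PID $\ZZ_p$ (left multiplication by elements of $\sl 2(\ZZ_p)$) brings any $g_p$ to an upper-triangular matrix with diagonal entries powers of $p$ and upper-right entry in $\ZZ[\tfrac{1}{p}]$, hence into $\sl 2(\ZZ[\tfrac{1}{p}])$.
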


\begin{proof}
1. Given $\left(x,\a\right)\in\RR\times\QQ_{p}$, with $\a=\sum_{i=\val\left(\a\right)}^{\infty}\a_{i}p^{i}$,
write 
\[
\left\{ \a\right\} =\sum_{i=\val\left(\a\right)}^{-1}\a_{i}p^{i}\in\ZZ\left[\frac{1}{p}\right].
\]
Then clearly $\a-\left\{ \a\right\} =\sum_{i=0}^{\infty}\a_{i}p^{i}\in\ZZ_{p}$,
and so 
\[
\left(x,\a\right)-\left(\left\{ \a\right\} ,\left\{ \a\right\} \right)\in\RR\times\ZZ_{p}.
\]
Now let $m\in\ZZ$ be an integer such that $x-\left\{ \a\right\} -m\in\left[\half,\half\right)$.
Since $\ZZ\subset\ZZ_{p}$, then 
\[
\left(x,\a\right)-\left(\left\{ \a\right\} +m,\left\{ \a\right\} +m\right)\in\left[\half,\half\right)\times\ZZ_{p}.
\]
For uniqueness, assume $\left(x,\a\right)$ and $\left(y,\b\right)$
are both in $\left[\half,\half\right)\times\ZZ_{p}$, with $f\in\ZZ\left[\frac{1}{p}\right]$
such that 
\[
\left(x,\a\right)+\left(f,f\right)=\left(y,\b\right).
\]
Then $f=\b-\a\in\ZZ_{p}$ must be an integer since $\ZZ\left[\frac{1}{p}\right]\cap\ZZ_{p}=\ZZ$,
and the real coordinate forces that $f=0$. 

2. For the second part, let $g=\left(g_{\infty},g_{p}\right)\in\sl 2\left(\RR\right)\times\sl 2\left(\QQ_{p}\right)$;
we show that there exists a unique $\left(\ga,\ga\right)\in\sl 2\left(\ZZ\left[\frac{1}{p}\right]\right)$
such that 
\[
\left(g_{\infty},g_{p}\right)\cdot\left(\ga,\ga\right)\in\mathcal{D}_{\infty}\times\sl 2\left(\ZZ_{p}\right).
\]
It is a consequence of row reduction that any $g_{p}\in\sl 2\left(\QQ_{p}\right)$
can be written (non-uniquely) as 
\[
g_{p}=k_{p}\ga_{p}
\]
with $k_{p}\in\sl 2\left(\ZZ_{p}\right)$ and $\ga_{p}\in\sl 2\left(\ZZ\left[\frac{1}{p}\right]\right)$.
Then $\left(g_{\infty},g_{p}\right)=\left(g_{\infty},k_{p}\ga_{p}\right)$
meaning that 
\[
\left(g_{\infty},g_{p}\right)\left(\ga_{p}^{-1},\ga_{p}^{-1}\right)=\left(g_{\infty}\ga_{p}^{-1},k_{p}\right).
\]
Write 
\[
\sl 2\left(\RR\right)\ni g_{\infty}\ga_{p}^{-1}=x_{\infty}\ga_{\infty}
\]
where $x_{\infty}\in\mathcal{D}_{\infty}$ and $\ga_{\infty}\in\sl 2\left(\ZZ\right)$.
Then
\[
\left(g_{\infty},g_{p}\right)\left(\ga_{p}^{-1}\ga_{\infty}^{-1},\ga_{p}^{-1}\ga_{\infty}^{-1}\right)=(x_{\infty},k_{p}\ga_{\infty}^{-1})\in\mathcal{D}_{\infty}\times\sl 2\left(\ZZ_{p}\right),
\]
which establishes existence. For uniqueness of $\left(\ga,\ga\right)=(\ga_{p}^{-1}\ga_{\infty}^{-1},\ga_{p}^{-1}\ga_{\infty}^{-1})$,
assume that $\left(g_{\infty},g_{p}\right)\cdot\left(\ga,\ga\right)=\left(x_{p},x_{\infty}\right)$
and $\left(g_{\infty},g_{p}\right)\cdot\left(\ga^{\prime},\ga^{\prime}\right)=\left(x_{\infty}^{\prime},x_{p}^{\prime}\right)$,
where both $\left(x_{\infty},x_{p}\right)$ and $\left(x_{\infty}^{\prime},x_{p}^{\prime}\right)$
lie in $\mathcal{D}_{\infty}\times\sl 2\left(\ZZ_{p}\right)$. Then
\[
\left(x_{\infty},x_{p}\right)\cdot\left(\ga^{-1}\ga^{\prime},\ga^{-1}\ga^{\prime}\right)=\left(x_{\infty}^{\prime},x_{p}^{\prime}\right)\in\mathcal{D}_{\infty}\times\sl 2\left(\ZZ_{p}\right)
\]
where $\ga^{-1}\ga^{\prime}\in\sl 2\left(\ZZ\left[\frac{1}{p}\right]\right)$.
From the $p$-adic component of the equation we have that $\ga^{-1}\ga^{\prime}\in\sl 2\left(\ZZ_{p}\right)$.
Then $\ga^{-1}\ga^{\prime}\in\sl 2\left(\ZZ\left[\frac{1}{p}\right]\right)\cap\sl 2\left(\ZZ_{p}\right)=\sl 2\left(\ZZ\right)$.
On the other hand, from the real component of the equation,  we have
that $\ga^{-1}\ga^{\prime}$ cannot lie in $\sl 2\left(\ZZ\right)$,
unless $\ga^{-1}\ga^{\prime}=\id$. We conclude that $\ga=\ga^{\prime}$. 
\end{proof}

\section{From primitive vectors to lattice points in the group $\protect\sl 2\left(\protect\RR\right)\times\protect\sl 2\left(\protect\QQ_{p}\right)$
\label{sec:Counting-lattice-points}}

It is well known that there exists a connection between primitive
vectors in $\ZZ^{2}$, and integral matrices inside $\sl 2\left(\RR\right)$;
the goal of this section is to establish an analogous connection in
the setting of $\RR\times\QQ_{p}$, hence reducing the proof of Theorem
\ref{thm: A ed of directions} to counting lattice points in the group
$\sl 2\left(\RR\right)\times\sl 2\left(\QQ_{p}\right)$. To exhibit
such a connection, we first extend the notion of primitive vectors
(subsection \ref{subsec:Primitive-vectors-over}), then find suitable
coordinates on $\sl 2\left(\FF\right)$ through a Bruhat-Iwasawa decomposition
(Definition \ref{def:Iwasawa-Bruhat}). Proposition \ref{prop: matrices representing  Z=00005B1/p=00005D primitive vectors }
states the precise connection.

\subsection{Primitive vectors over $\protect\ZZ\left[\frac{1}{p}\right]$\label{subsec:Primitive-vectors-over}}

We aim to formulate a connection between primitive vectors in $\ZZ\left[\frac{1}{p}\right]^{2}$
and matrices inside $\sl 2\left(\ZZ\left[\frac{1}{p}\right]\right)$.
First, let us extend the definition of a primitive vector from $\ZZ$
to a general integral domain: 
\begin{defn}
Let $\intdom$ be an integral domain. A vector $v=\left(a,b\right)\in\intdom^{2}$
is called \emph{primitive} if  the prime ideals $\left\langle a\right\rangle $
and $\left\langle b\right\rangle $ satisfy that $\left\langle a\right\rangle +\left\langle b\right\rangle =\intdom$.
In other words, if there exists a solution $\left(x,y\right)\in\intdom^{2}$
to the $\intdom$--diophantine equation 
\[
ax+by=1.
\]
We refer to this equation as the\emph{ gcd equation} of $v$, and
denote the set of primitive elements in $\intdom^{2}$ by $\intdom_{\prim}^{2}$. 
\end{defn}

Clearly, the set of primitive $\ZZ\left[\frac{1}{p}\right]^{2}$ vectors
contains the set of primitive $\ZZ^{2}$ vectors, but is not equal
to it; e.g.\ the vector $(p,0)$ is primitive in $\ZZ\left[\frac{1}{p}\right]^{2}$
but not in $\ZZ^{2}$. However, these sets are very much related to
each other: every element in $\ZZ\left[\frac{1}{p}\right]_{\prim}^{2}$
is a multiplication by a power of $p$ of an element in $\ZZ_{\prim}^{2}$. 
\begin{lem}
\label{fact: primitive vectors over Z=00005B1/p=00005D}The primitive
vectors in $\ZZ\left[\frac{1}{p}\right]^{2}$ are $\left\{ p^{\a}v:\a\in\ZZ,\,v\in\ZZ_{\prim}^{2}\right\} $. 
\end{lem}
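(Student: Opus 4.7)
The proof will establish the two inclusions separately, using the observation that the group of units $\ZZ\!\left[\frac{1}{p}\right]^{\times}$ is exactly $\{\pm p^{\a}:\a\in\ZZ\}$.

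For the inclusion $\{p^{\a}v:\a\in\ZZ,\,v\in\ZZ_{\prim}^{2}\}\subseteq\ZZ\!\left[\frac{1}{p}\right]_{\prim}^{2}$, suppose $v=(a,b)\in\ZZ_{\prim}^{2}$ with integer Bezout coefficients $ax+by=1$. For any $\a\in\ZZ$, I would simply rewrite this as
\[
(p^{\a}a)(p^{-\a}x)+(p^{\a}b)(p^{-\a}y)=1,
\]
which is a $\ZZ\!\left[\frac{1}{p}\right]$--gcd equation for $p^{\a}v$, since $p^{-\a}x,p^{-\a}y\in\ZZ\!\left[\frac{1}{p}\right]$.

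For the reverse inclusion, given $w=(a,b)\in\ZZ\!\left[\frac{1}{p}\right]_{\prim}^{2}$, I would use that every nonzero element of $\ZZ\!\left[\frac{1}{p}\right]$ has a unique representation $p^{k}m$ with $m\in\ZZ$, $p\nmid m$, and $k\in\ZZ$ (the case where $a$ or $b$ vanishes is handled separately, since primitivity then forces the other coordinate to be a unit $\pm p^{k}$). Writing $a=p^{k_{1}}m_{1}$ and $b=p^{k_{2}}m_{2}$ and setting $\a=\min\{k_{1},k_{2}\}$, I factor
\[
w=p^{\a}\bigl(p^{k_{1}-\a}m_{1},\,p^{k_{2}-\a}m_{2}\bigr)=p^{\a}v,
\]
so $v\in\ZZ^{2}$ and at least one coordinate of $v$ (the one realizing the minimum) is coprime to $p$.

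It remains to verify that $v\in\ZZ_{\prim}^{2}$. Starting from the $\ZZ\!\left[\frac{1}{p}\right]$--gcd equation $ax+by=1$, clearing denominators in $x,y$ by some large power $p^{M}$ yields an integer identity $v_{1}x'+v_{2}y'=p^{M-\a}$ with $x',y'\in\ZZ$. Hence $\gcd(v_{1},v_{2})$ divides a power of $p$ in $\ZZ$; combined with the fact that one of $v_{1},v_{2}$ is not divisible by $p$, this forces $\gcd(v_{1},v_{2})=1$, so $v\in\ZZ_{\prim}^{2}$ as required. The only mildly subtle step is the last one, where one needs to combine the cleared Bezout identity with the non-divisibility of a single coordinate; everything else is a direct manipulation of valuations.
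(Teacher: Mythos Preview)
Your proof is correct and follows essentially the same route as the paper: factor out the minimal power of $p$, then clear denominators in the $\ZZ[\frac{1}{p}]$--Bezout identity to obtain an integer relation $v_{1}x'+v_{2}y'=p^{m}$, forcing $\gcd(v_{1},v_{2})$ to be a power of $p$ and hence $1$. You are in fact slightly more careful than the paper, which omits both the easy inclusion $\{p^{\a}v\}\subseteq\ZZ[\frac{1}{p}]_{\prim}^{2}$ and the degenerate case where a coordinate vanishes.
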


For the proof, we observe that every element $f$ of $\ZZ\left[\frac{1}{p}\right]$
can be written as $f=\frac{m}{p^{n}}$ where $m,n\in\ZZ$ and $m$
is coprime to $p$. If $\deg(f)=0$ (as a polynomial in $\frac{1}{p}$),
then $f$ is an integer and this fact is clear. Otherwise, let $n>0$
and write 
\[
f=\frac{a_{n}}{p^{n}}+\frac{a_{n-1}}{p^{n-1}}+\cdots+\frac{a_{1}}{p}+a_{0}
\]
where $a_{0},\ldots,a_{n}$ are integers that are (except maybe $a_{0}$)
coprime to $p$, and $a_{n}\neq0$. Then 
\[
f=\frac{a_{n}+a_{n-1}p+\cdots+a_{1}p^{n-1}+a_{0}p^{n}}{p^{n}},
\]
and the denominator is coprime to $p$ when $n>0$.
\begin{proof}[Proof of Lemma \ref{fact: primitive vectors over Z=00005B1/p=00005D}]
Let $\left(a,b\right)\in\ZZ\left[\frac{1}{p}\right]_{\prim}^{2}$
and write $a=a^{\prime}p^{\a}$ and $b=b^{\prime}p^{\b}$ where $a^{\prime},b^{\prime}\in\ZZ$
are co-prime to $p$. We claim that $\gcd(a^{\prime},b^{\prime})=1$.
To see this, substitute $a$ and $b$ into the $\gcd$ equation $ax+by=1$
($x,y$ variables in $\ZZ\left[\frac{1}{p}\right]$) to obtain $a^{\prime}p^{\a}x+b^{\prime}p^{\b}y=1$.
Multiply both sides by a non-negative power of $p$ to obtain an equation
in integers: $a^{\prime}x^{\prime}+b^{\prime}y^{\prime}=p^{m}$ where
$x^{\prime},y^{\prime}\in\ZZ$. It then follows that $\gcd\left(a^{\prime},b^{\prime}\right)$
is a power of $p$, but since both $a^{\prime},b^{\prime}$ are co-prime
to $p$, then $\gcd\left(a^{\prime},b^{\prime}\right)=1$. Without
loss of generality, assume $\b\geq\a$. Then $\left(a,b\right)=p^{\a}\left(a^{\prime},b^{\prime}p^{\b-\a}\right)=p^{\a}v$
where $v=\left(a^{\prime},b^{\prime}p^{\b-\a}\right)\in\ZZ_{\prim}^{2}$. 
\end{proof}
To formulate the connection between primitive $\ZZ\left[\frac{1}{p}\right]^{2}$
vectors to matrices in $\sl 2\left(\ZZ\left[\frac{1}{p}\right]\right)$,
we introduce the Bruhat-Iwasawa decomposition of $\sl 2\left(\FF\right)$.

\subsection{Iwasawa decomposition of $\protect\sl 2\left(\protect\RR\right)$ }

Let us first recall the $KAN$ decomposition of $\sl 2\left(\RR\right)$.
Here $\sl 2\left(\RR\right)=K_{\infty}A_{\infty}N_{\infty}$ where
$K_{\infty}=\so 2\left(\RR\right)$ is maximal compact, $A_{\infty}=\left\{ \left(\begin{smallmatrix}\a & 0\\
0 & \a^{-1}
\end{smallmatrix}\right)\right\} $ is the diagonal subgroup and $N_{\infty}=\left\{ \left(\begin{smallmatrix}1 & \RR\\
0 & 1
\end{smallmatrix}\right)\right\} $ is the subgroup of upper unipotent matrices. Then, for $\left(\begin{smallmatrix}a & c\\
b & d
\end{smallmatrix}\right)\in\sl 2\left(\RR\right)$, 
\[
\left(\begin{array}{cc}
a & c\\
b & d
\end{array}\right)=\exd{\frac{1}{\sqrt{a^{2}+b^{2}}}\left(\begin{array}{cc}
a & -b\\
b & a
\end{array}\right)}{\in K_{\infty}}\exd{\left(\begin{array}{cc}
\sqrt{a^{2}+b^{2}} & 0\\
0 & \frac{1}{\sqrt{a^{2}+b^{2}}}
\end{array}\right)}{\in A_{\infty}}\exd{\left(\begin{array}{cc}
1 & \frac{ac+bd}{a^{2}+b^{2}}\\
0 & 1
\end{array}\right)}{\in N_{\infty}}.
\]
By letting $v:=\left(a,b\right)^{\transpose}$ and $w:=\left(c,d\right)^{\transpose}$,
we obtain
\begin{equation}
\left(\begin{array}{cc}
v & w\end{array}\right)=\exd{\left(\begin{array}{cc}
\hat{v} & \hat{v}^{\perp}\end{array}\right)}{\in K_{\infty}}\exd{\left(\begin{array}{cc}
\norm v & 0\\
0 & \norm v^{-1}
\end{array}\right)}{\in A_{\infty}}\exd{\left(\begin{array}{cc}
1 & \frac{\left\langle w,\hat{v}\right\rangle }{\norm v}\\
0 & 1
\end{array}\right)}{\in N_{\infty}},\label{eq: Iwasawa SL(2,R)}
\end{equation}
where 
\[
\hat{v}:=v/\norm v
\]
is the unit vector pointing in the direction of $v$, 
\[
v^{\perp}:=\left(-b,a\right)
\]
is the vector pointing in the orthogonal direction to $v$, and $\left\langle \cdot,\cdot\right\rangle $
is the standard dot product in $\RR^{2}$. 

We note that any pair of the three subgroups $K_{\infty}$, $A_{\infty}$
and $N_{\infty}$ intersect trivially, and therefore the decomposition
$\sl 2\left(\RR\right)=K_{\infty}A_{\infty}N_{\infty}$ induces coordinates
on $\sl 2\left(\RR\right)$: every  element $g\in\sl 2\left(\RR\right)$
has a unique presentation as $g=kan$. In addition, a Haar measure
on $\sl 2\left(\RR\right)$ can be decomposed in the Iwasawa coordinates
as 
\[
d\mu_{\sl 2\left(\RR\right)}(g)=d\mu_{\sl 2\left(\RR\right)}(kan)=\frac{d\mu_{K_{\infty}}(k)d\mu_{A_{\infty}}(a)d\mu_{N_{\infty}}(n)}{\a},
\]
where: $\mu_{N_{\infty}}$ is the Haar measure on $N_{\infty}$ corresponding
to the Lebesgue measure on $\RR$ under the isomorphism $\left(\begin{smallmatrix}1 & x\\
0 & 1
\end{smallmatrix}\right)\leftrightarrow x$; $\mu_{K_{\infty}}$ is the Haar measure on $K_{\infty}$ corresponding
to the Lebesgue measure on $\sphere 1$ under the isomorphism $\left(\begin{smallmatrix}\cos\theta & \sin\theta\\
-\sin\theta & \cos\theta
\end{smallmatrix}\right)\leftrightarrow\theta$; and $\mu_{A_{\infty}}$ is a Haar measure on $A_{\infty}$ corresponding
to the Lebesgue measure on the multiplicative group of $\RR_{>0}$
under the isomorphism $\left(\begin{smallmatrix}\a & 0\\
0 & \a^{-1}
\end{smallmatrix}\right)\leftrightarrow\a$. With these isomorphisms in mind, and recalling that the Haar measure
on $\left(\RR_{>0},\cdot\right)$ is $\frac{d\a}{\a}$, we have
\begin{equation}
d\mu_{\sl 2\left(\RR\right)}(g)=d\mu_{\sl 2\left(\RR\right)}\left(\left(\begin{smallmatrix}\cos\theta & \sin\theta\\
-\sin\theta & \cos\theta
\end{smallmatrix}\right)\left(\begin{smallmatrix}\a & 0\\
0 & \a^{-1}
\end{smallmatrix}\right)\left(\begin{smallmatrix}1 & x\\
0 & 1
\end{smallmatrix}\right)\right)=\frac{d\theta d\a dx}{\a^{2}},\label{eq: Iwasawa measure}
\end{equation}
where $dy$ stands for integration w.r.t.\ the Lebesgue measure
on $\RR$.

\subsection{Bruhat decomposition of $\protect\sl 2\left(\protect\QQ_{p}\right)$}

Proceeding to the $p$-adic case, the group $\sl 2\left(\QQ_{p}\right)$
can also be written as $K_{p}A_{p}N_{p}$, but these are not coordinates.
Indeed, set 
\begin{equation}
\begin{array}{cclcc}
K_{p} & := & \left\{ \left(\begin{array}{cc}
a & c\\
b & d
\end{array}\right):a,b,c,d\in\ZZ_{p},ad-bc=1\right\}  & = & \sl 2\left(\ZZ_{p}\right)\\
A_{p} & := & \left\{ \left(\begin{array}{cc}
p^{-t} & 0\\
0 & p^{t}
\end{array}\right):t\in\ZZ\right\} \\
N_{p} & := & \left\{ \left(\begin{array}{cc}
1 & \a\\
0 & 1
\end{array}\right):\a\in\QQ_{p}\right\} .
\end{array}\label{eq: KAN SL(2,Q_p)}
\end{equation}
Since $K_{p}$ and $N_{p}$ intersect  non-trivially, the Iwasawa
decomposition on $\sl 2\left(\QQ_{p}\right)$ is not unique. To remedy
this, we use the Bruhat decomposition instead. Let 
\[
M_{p}:=\left\{ \left(\begin{array}{cc}
u & 0\\
0 & u^{-1}
\end{array}\right):u\in\ZZ_{p}^{*}\right\} 
\]
 be the centralizer of $A_{p}$ in $K_{p}$, $D_{p}:=\left\{ \left(\begin{smallmatrix}\a & 0\\
0 & \a^{-1}
\end{smallmatrix}\right):\a\in\QQ_{p}\right\} $ be the diagonal subgroup (satisfying $D_{p}=M_{p}A_{p}$), and 
\[
N_{p}^{-}:=\left\{ \left(\begin{array}{cc}
1 & 0\\
\a & 1
\end{array}\right):\a\in\QQ_{p}\right\} 
\]
be the lower unipotent subgroup. Each two of the subgroups $N_{p}^{-}$,
$M_{p}$, $A_{p}$ and $N_{p}$ intersect trivially, which means that
they induce coordinates --- the Bruhat coordinates --- on the subset
\[
N_{p}^{-}M_{p}A_{p}N_{p}\subset\sl 2(\QQ_{p}).
\]
This set is not the whole of $\sl 2(\QQ_{p})$, but its complement
in $\sl 2\left(\QQ_{p}\right)$ is of Haar measure zero. 

We choose the following Haar measures on the above subgroups. The
natural isomorphisms between $N_{p}$ and $N_{p}^{-}$ to $\QQ_{p}$
equip $N_{p}$ and $N_{p}^{-}$ with Haar measures that correspond
to the Haar measure $\mu_{p}$ on $\QQ_{p}$:
\[
d\mu_{N_{p}}\left(\begin{smallmatrix}1 & \b\\
0 & 1
\end{smallmatrix}\right)=d\mu_{p}(\b),\qquad d\mu_{N_{p}^{-}}\left(\begin{smallmatrix}1 & 0\\
\a & 1
\end{smallmatrix}\right)=d\mu_{p}(\a).
\]
Similarly, the natural isomorphism between $A_{p}$ to $\ZZ$ induces
$A_{p}$ with the Haar measure on $\ZZ$ that is the counting measure:
\[
\mu_{A_{p}}=\sum_{a\in A_{p}}\Delta_{a}.
\]
Finally, $D_{p}$ is naturally isomorphic with the multiplicative
group $\QQ_{p}^{\times}$ of the field $\QQ_{p}$, from which it inherits
the Haar measure:
\[
d\mu_{D_{p}}\left(\begin{smallmatrix}\a & 0\\
0 & \a^{-1}
\end{smallmatrix}\right)=d\mu_{\QQ_{p}^{\times}}(\a)=\frac{d\mu_{p}(\a)}{\absp{\a}}.
\]
Since the measure on $\QQ_{p}^{\times}$ restricts to the measure
on $\ZZ_{p}^{\times}$, and the latter is isomorphic to $M_{p}$ in
the same way that $\QQ_{p}^{\times}$ is isomorphic to $D_{p}$, we
obtain a Haar measure on $M_{p}$:
\[
d\mu_{M_{p}}\left(\begin{smallmatrix}u & 0\\
0 & u^{-1}
\end{smallmatrix}\right)=d\mu_{\ZZ_{p}^{\times}}(u)=\frac{d\mu_{p}(u)}{\absp u}=d\mu_{p}(u).
\]

A Haar measure on $\sl 2\left(\QQ_{p}\right)$ can be expressed in
the Bruhat coordinates as follows.
\begin{lem}
\label{fact: Haar measure on SL2(Q_p)}The Haar measure on $G_{p}=\sl 2\left(\QQ_{p}\right)$
w.r.t. the Bruhat coordinates is 
\[
d\mu_{G_{p}}(g)=d\mu_{G_{p}}\left(\left(\begin{smallmatrix}1 & 0\\
\a & 1
\end{smallmatrix}\right)\left(\begin{smallmatrix}u & 0\\
0 & u^{-1}
\end{smallmatrix}\right)\left(\begin{smallmatrix}p^{-t} & 0\\
0 & p^{t}
\end{smallmatrix}\right)\left(\begin{smallmatrix}1 & \b\\
0 & 1
\end{smallmatrix}\right)\right)=p^{2t}d\mu_{p}(\a)d\mu_{p}(u)\Delta_{t}d\mu_{p}(\b)
\]
\end{lem}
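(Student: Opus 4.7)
The plan is to compute the Haar measure on $G_p=\sl 2(\QQ_p)$ in the Bruhat chart by transporting an explicit formula from matrix-entry coordinates through a change of variables.

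\textbf{Step 1: a matrix-entry formula for the Haar measure.} On the Zariski-open subset $U=\{g\in G_p:a_{11}(g)\neq 0\}$, using the three coordinates $(a,b,c):=(a_{11},a_{12},a_{21})$ (with $a_{22}=(1+bc)/a$ forced by $\det g=1$), a Haar measure on $G_p$ is given by
\[
d\mu_{G_p}(g)=\frac{d\measp(a)\,d\measp(b)\,d\measp(c)}{\absp{a}}.
\]
This is the $p$-adic analog of the classical formula underlying (\ref{eq: Iwasawa measure}). It can be verified by a direct invariance check: left (or right) multiplication by elements of $N_p,N_p^-,M_p,A_p$ acts on $(a,b,c)$ by smooth maps whose $p$-adic Jacobians precisely compensate the change in $\absp{a}$. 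The complement $\{a_{11}=0\}$ is a Zariski-closed proper subvariety and so has Haar measure zero; moreover $U$ coincides with the Bruhat big cell $N_p^-M_pA_pN_p$, where the Bruhat coordinates are defined.

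\textbf{Step 2: the Bruhat parametrization and its Jacobian.} Multiplying out the four factors gives
\[
\left(\begin{smallmatrix}1 & 0\\ \a & 1\end{smallmatrix}\right)\left(\begin{smallmatrix}u & 0\\ 0 & u^{-1}\end{smallmatrix}\right)\left(\begin{smallmatrix}p^{-t} & 0\\ 0 & p^{t}\end{smallmatrix}\right)\left(\begin{smallmatrix}1 & \b\\ 0 & 1\end{smallmatrix}\right)=\left(\begin{smallmatrix}up^{-t} & up^{-t}\b\\ \a up^{-t} & \a up^{-t}\b+u^{-1}p^{t}\end{smallmatrix}\right),
\]
so $(a,b,c)=(up^{-t},\,up^{-t}\b,\,\a up^{-t})$. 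For each fixed $t\in\ZZ$, the map $(\a,u,\b)\mapsto(a,b,c)$ from $\QQ_p\times\ZZ_p^{\times}\times\QQ_p$ onto the corresponding slice of $U$ is a bijective analytic map; a short computation (the Jacobian matrix has non-zero entries only in the pattern producing a single diagonal product) shows its Jacobian determinant equals $u^{2}p^{-3t}$, and hence $|\mathrm{Jac}|_p=p^{3t}$ since $|u|_p=1$ for $u\in\ZZ_p^\times$.

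\textbf{Step 3: assembly.} The $p$-adic change-of-variables formula yields
\[
d\measp(a)\,d\measp(b)\,d\measp(c)=p^{3t}\,d\measp(\a)\,d\measp(u)\,d\measp(\b),
\]
and dividing by $\absp{a}=p^{t}$ produces the slice-wise density $p^{2t}\,d\measp(\a)\,d\measp(u)\,d\measp(\b)$. Recognising the counting measure $\Delta_t$ on $A_p\cong\ZZ$ as the remaining Haar factor, summing over $t\in\ZZ$ gives the formula stated in the lemma. The main obstacle is Step 1 --- once the matrix-entry formula is secured, everything else reduces to a single Jacobian computation; and Step 1 itself can be handled either by the direct generator-by-generator invariance check sketched above or by a Gelfand--Leray residue on the hypersurface $ad-bc=1$ inside $\mathrm{Mat}_2(\QQ_p)$.
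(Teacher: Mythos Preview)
Your argument is correct, and it reaches the same conclusion by a genuinely different route than the paper. The paper proceeds structurally: it writes $G_p$ (up to measure zero) as $P_p^-N_p$ with $P_p^-=N_p^-D_p$ the lower-triangular parabolic, invokes the general fact that for a unimodular group one has $\mu_{G_p}=\mu_{P_p^-}^{L}\times\mu_{N_p}$ (citing \cite[Lemma~11.31]{EW13}), and then computes the left Haar measure on $P_p^-$ in the coordinates $N_p^-D_p$ by a change of variables internal to $P_p^-$, picking up the modular factor $\absp{\a}^{2}$. Your approach instead starts from the matrix-entry formula $d\mu_{G_p}=\absp{a_{11}}^{-1}\,d\measp(a_{11})\,d\measp(a_{12})\,d\measp(a_{21})$ on the big cell (justified either by a Gelfand--Leray residue on $\det=1$ or by a direct generator-by-generator invariance check), and then runs a single Jacobian computation for the map $(\a,u,\b)\mapsto(a_{11},a_{12},a_{21})$ at fixed $t$. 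Your method is more elementary in that it avoids the abstract decomposition lemma and never explicitly invokes modular functions; the paper's method is more structural and transparently generalizes to other semisimple groups and parabolic decompositions. Both rely on a change of variables, but applied at different levels: yours on the ambient $3$-dimensional chart, theirs inside the $2$-dimensional parabolic $P_p^-$.
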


\begin{proof}
Let $P_{p}^{-}$ denote the subgroup of lower triangular matrices
in $\sl 2\left(\QQ_{p}\right)$, meaning that $P_{p}^{-}=N_{p}^{-}D_{p}=N_{p}^{-}M_{p}A_{p}$.
By \cite[Lemma 11.31]{EW13}, since $G_{p}$ is unimodular, a Haar
measure $\mu_{G_{p}}=\mu_{\sl 2\left(\QQ_{p}\right)}$ is given by
 $\mu_{G_{p}}=\mu_{P_{p}^{-}}^{L}\times\mu_{N_{p}}^{R}$, where $\mu_{P_{p}^{-}}^{L}$
is a left Haar measure on $P_{p}^{-}$ and $\mu_{N_{p}}^{R}$ is a
right Haar measure on $N_{p}$, which is simply $\mu_{N_{p}}$. It
is left to compute a left Haar measure on $P_{p}^{-}$. Note that
$P_{p}^{-}$ can be introduced in two ways as $N_{p}^{-}D_{p}=D_{p}N_{p}^{-}$,
but the expression for the Haar measure will correspond to the choice
of coordinates. It is easy to show that $\mu_{D_{p}}\times\mu_{N_{p}^{-}}$
is a left Haar measure on $P_{p}^{-}=D_{p}N_{p}^{-}$; to obtain this
left Haar measure in the coordinates $N_{p}^{-}D_{p}$ we will perform
a change of variables:
\begin{align*}
 & \int\int f\left(\left[\begin{smallmatrix}\a^{-1} & 0\\
o & \a
\end{smallmatrix}\right]\left[\begin{smallmatrix}1 & 0\\
y & 1
\end{smallmatrix}\right]\right)d\mu_{D_{p}}\left(\left[\begin{smallmatrix}\a^{-1} & 0\\
o & \a
\end{smallmatrix}\right]\right)d\mu_{N_{p}^{-}}\left(\left[\begin{smallmatrix}1 & 0\\
y & 1
\end{smallmatrix}\right]\right)\\
 & =\int\int f\left(\left[\begin{smallmatrix}1 & 0\\
y\a^{-2} & 1
\end{smallmatrix}\right]\left[\begin{smallmatrix}\a & 0\\
o & \a^{-1}
\end{smallmatrix}\right]\right)d\mu_{D_{p}}\left(\left[\begin{smallmatrix}\a^{-1} & 0\\
o & \a
\end{smallmatrix}\right]\right)d\mu_{N_{p}^{-}}\left(\left[\begin{smallmatrix}1 & 0\\
y & 1
\end{smallmatrix}\right]\right)\\
 & =\int\int f\left(\left[\begin{smallmatrix}1 & 0\\
x & 1
\end{smallmatrix}\right]\left[\begin{smallmatrix}\a & 0\\
o & \a^{-1}
\end{smallmatrix}\right]\right)\absp{\a}^{2}d\mu_{D_{p}}\left(\left[\begin{smallmatrix}\a^{-1} & 0\\
o & \a
\end{smallmatrix}\right]\right)d\mu_{N_{p}^{-}}\left(\left[\begin{smallmatrix}1 & 0\\
x & 1
\end{smallmatrix}\right]\right)
\end{align*}
Since the integral on the left-hand side is invariant under replacing
$f(g)$ by $f(hg)$ for any $h\in P_{p}^{-}$, then so is the integral
on the right-hand side. Hence $\mu_{P_{p}^{-}}^{L}=\absp{\a}^{2}\left(\mu_{D_{p}}\times\mu_{N_{p}^{-}}\right)$.

Now, since $D_{p}=M_{p}A_{p}$ where $M_{p}$ and $A_{p}$ commute
and are abelian, we have that $\mu_{D_{p}}=\mu_{M_{p}}\times\mu_{A_{p}}$.
We conclude that a left Haar measure on $G_{p}$ is given in the Bruhat
coordinates as 
\[
\mu_{G_{p}}=\absp{\a}^{2}\left(\mu_{N_{p}^{-}}\times\mu_{M_{p}}\times\mu_{A_{p}}\times\mu_{N_{p}}\right).\qedhere
\]
\end{proof}
\begin{rem}
Under this choice of Haar measure, the (compact and open) subgroup
$\sl 2\left(\ZZ_{p}\right)$ has mass $1-\frac{1}{p}$. Indeed, we
have
\[
\sl 2\left(\ZZ_{p}\right)\simeq N_{p}^{-}\left(\ZZ_{p}\right)\times M_{p}\times N_{p}\left(\ZZ_{p}\right)\simeq\ZZ_{p}\times\ZZ_{p}^{\times}\times\ZZ_{p}
\]
and therefore 
\[
\mu_{G_{p}}\left(\sl 2\left(\ZZ_{p}\right)\right)=\measp\left(\ZZ_{p}\right)\measp\left(\ZZ_{p}^{\times}\right)\measp\left(\ZZ_{p}\right)=1\cdot\left(1-\frac{1}{p}\right)\cdot1=1-\frac{1}{p}.
\]
\end{rem}

\subsection{The Bruhat-Iwasawa coordinates }

While the Bruhat decomposition of $\sl 2(\QQ_{p})$ provides uniqueness,
the Iwasawa decomposition provides an arithmetic interpretation of
the different components, as suggested in (\ref{eq: Iwasawa SL(2,R)})
for the $\sl 2(\RR)$ case. Luckily, these two decompositions coincide
on a ``large'' subset of $\sl 2(\QQ_{p})$, on which we are going
to focus from now on. 

Consider $g=\left(\begin{smallmatrix}a & c\\
b & d
\end{smallmatrix}\right)\in\sl 2\left(\QQ_{p}\right)$; assuming that $a\neq0$ we have that 
\begin{align*}
\left(\begin{array}{cc}
a & c\\
b & d
\end{array}\right) & \overset{_{a\neq0}}{=}\exd{\left(\begin{array}{cc}
1 & 0\\
\frac{b}{a} & 1
\end{array}\right)}{\in N_{p}^{-}}\exd{\left(\begin{array}{cc}
a & 0\\
0 & a^{-1}
\end{array}\right)}{\in D_{p}}\exd{\left(\begin{array}{cc}
1 & \frac{c}{a}\\
0 & 1
\end{array}\right)}{\in N_{p}}\\
 & \overset{_{a=u_{a}\absp a^{-1}}}{=}\exd{\left(\begin{array}{cc}
1 & 0\\
\frac{b}{a} & 1
\end{array}\right)}{\in N_{p}^{-}}\exd{\left(\begin{array}{cc}
u_{a} & 0\\
0 & u_{a}^{-1}
\end{array}\right)}{\in M_{p}}\exd{\left(\begin{array}{cc}
\absp a^{-1} & 0\\
0 & \absp a
\end{array}\right)}{\in A_{p}}\exd{\left(\begin{array}{cc}
1 & \frac{c}{a}\\
0 & 1
\end{array}\right)}{\in N_{p}}.
\end{align*}
Indeed, the set of $g\in\sl 2(\QQ_{p})$ with $a\neq0$ is exactly
$N_{p}^{-}M_{p}A_{p}N_{p}$. If we assume further that $\absp a\geq\absp b$,
then $\frac{b}{a}\in\ZZ_{p}$ and therefore the $N_{p}^{-}$ component
lies also in $K_{p}$; letting 
\[
G_{p}^{+}=\sl 2\left(\QQ_{p}\right)^{+}:=\left\{ \left(\begin{array}{cc}
a & c\\
b & d
\end{array}\right)\in\sl 2\left(\QQ_{p}\right):\absp a\geq\absp b\right\} ,
\]
in which necessarily $a\neq0$, we conclude that the Bruhat decomposition
of $G_{p}^{+}$, which is unique, coincides with the Iwasawa decomposition.
This is due to the fact that the $N_{p}^{-}M_{p}$ component in the
Bruhat decomposition  coincides with the $K_{p}$ component in the
Iwasawa decomposition. Denoting this component by 
\[
\leftgrp_{p}:=\left\{ \substack{\text{lower triangular}\\
\text{matrices in \ensuremath{K_{p}}}
}
\right\} =\left\{ \left(\begin{array}{cc}
u & 0\\
m & u^{-1}
\end{array}\right):m\in\ZZ_{p},u\in\ZZ_{p}^{\times}\right\} <K_{p}
\]
(note that it indeed lies in $G_{p}^{+}$), we have that $G_{p}^{+}=\leftgrp_{p}A_{p}N_{p}$,
and these are coordinates on $G_{p}^{+}$. 

Moving forward to the arithmetic interpretation of these coordinates,
it is clear that the first columns of the elements in $G_{p}^{+}$
lie in the $p$-adic ``right half plane'': 
\[
\QQ_{p}^{2\,+}:=\left\{ (a,b)\in\QQ_{p}^{2}:\absp a\geq\absp b\right\} .
\]
Accordingly, the right half of the $p$-adic unit sphere is denoted
\[
\sphere{1,+}_{p}:=\sphere 1_{p}\cap\QQ_{p}^{2\,+}=\left\{ (u,a):u\in\ZZ_{p}^{\times},a\in\ZZ_{p}\right\} .
\]

\begin{fact}
\label{fact: p-adic half-sphere is Q}The half-sphere $\sphere{1,+}_{p}$
is homeomorphic to $\leftgrp_{p}$ and to $\ZZ_{p}^{\times}\times\ZZ_{p}$.
Its Haar measure $\measp^{2}\left(\sphere{1,+}_{p}\right)$ equals
$\measp(\ZZ_{p}^{\times})\measp(\ZZ_{p})=1-\frac{1}{p}=\mu_{G_{p}}\left(\sl 2\left(\ZZ_{p}\right)\right)$.
\end{fact}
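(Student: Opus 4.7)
The statement is essentially a matter of unpacking the definitions, so the plan is straightforward. First I would exhibit the two homeomorphisms explicitly. The identification with $\ZZ_{p}^{\times}\times\ZZ_{p}$ is simply the tautology arising from the defining description
\[
\sphere{1,+}_{p}=\left\{ (u,a):u\in\ZZ_{p}^{\times},\,a\in\ZZ_{p}\right\},
\]
so the map $(u,a)\mapsto(u,a)$ is a bijection; continuity in both directions is clear from the product topology on $\QQ_{p}^{2}$ and the fact that $\ZZ_{p}^{\times}$ is an open subspace of $\ZZ_{p}$. The identification with $\leftgrp_{p}$ is the first-column map: send $(u,a)\in\ZZ_{p}^{\times}\times\ZZ_{p}$ to the matrix
\[
\begin{pmatrix} u & 0 \\ a & u^{-1} \end{pmatrix}\in\leftgrp_{p}.
\]
Since $u\mapsto u^{-1}$ is continuous on $\ZZ_{p}^{\times}$, this is a continuous bijection whose inverse reads off the $(1,1)$ and $(2,1)$ entries, hence also continuous.

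For the measure computation, I would use that $\measp^{2}$ is by definition the product measure $\measp\times\measp$ on $\QQ_{p}^{2}$, so the above identification gives immediately
\[
\measp^{2}(\sphere{1,+}_{p})=\measp(\ZZ_{p}^{\times})\cdot\measp(\ZZ_{p}).
\]
Then I would invoke the elementary calculation already performed in the text, namely $\measp(\ZZ_{p})=1$ and $\measp(\ZZ_{p}^{\times})=\measp(\ZZ_{p}\setminus p\ZZ_{p})=1-\frac{1}{p}$, giving the claimed value $1-\frac{1}{p}$. The final equality with $\mu_{G_{p}}\left(\sl 2(\ZZ_{p})\right)$ is exactly the content of the remark immediately preceding this statement, so no further computation is needed.

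There is no real obstacle here; the only thing that needs a moment of care is ensuring that the homeomorphism $\ZZ_{p}^{\times}\times\ZZ_{p}\to\leftgrp_{p}$ is in fact continuous in both directions, which reduces to continuity of inversion on the open subset $\ZZ_{p}^{\times}\subset\ZZ_{p}$, and this is standard.
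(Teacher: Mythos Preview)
Your proposal is correct and is exactly the routine verification the paper intends; the paper states this as a Fact without proof, and your argument fills in the obvious details (the tautological identification with $\ZZ_{p}^{\times}\times\ZZ_{p}$, the first-column map to $\leftgrp_{p}$, and the product-measure computation). One minor navigational point: the computation of $\mu_{G_{p}}(\sl 2(\ZZ_{p}))=1-\tfrac{1}{p}$ is not in the remark \emph{immediately} preceding the Fact but in the Remark following Lemma~\ref{fact: Haar measure on SL2(Q_p)}, a subsection earlier; the content you cite is correct, only the location is slightly off.
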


To conclude, we have that for $\left(\begin{smallmatrix}a & c\\
b & d
\end{smallmatrix}\right)\in G_{p}^{+}$ (i.e. $v=(a,b)^{\transpose}\in\QQ_{p}^{2\,+}$),
\[
\left(\begin{array}{cc}
a & c\\
b & d
\end{array}\right)\overset{a=u_{a}\absp a^{-1}}{=}\exd{\left(\begin{array}{cc}
u_{a} & 0\\
\frac{b}{a}u_{a} & u_{a}^{-1}
\end{array}\right)}{\in\leftgrp_{p}}\exd{\left(\begin{array}{cc}
\absp a^{-1} & 0\\
0 & \absp a
\end{array}\right)}{\in A_{p}}\exd{\left(\begin{array}{cc}
1 & \frac{c}{a}\\
0 & 1
\end{array}\right)}{\in N_{p}},
\]
which means that the $p$-adic analog to (\ref{eq: Iwasawa SL(2,R)})
is the following. For $g=\left(\begin{array}{cc}
v & w\end{array}\right)\in G_{p}^{+}$, it holds that 
\begin{equation}
\left(\begin{array}{cc}
v & w\end{array}\right)=\exd{\left(\begin{matrix}\dirp v & *\end{matrix}\right)}{\in\leftgrp_{p}}\exd{\left(\begin{array}{cc}
\normp v^{-1} & 0\\
0 & \normp v
\end{array}\right)}{\in A_{p}}\exd{\left(\begin{array}{cc}
1 & \frac{y\left(w\right)}{y\left(v\right)}\\
0 & 1
\end{array}\right)}{\in N_{p}},\label{eq: Bruhat-Iwasawa SL2(Qp)}
\end{equation}
where $v=(x(v),y(v))^{\transpose}$, $w=(x(w),y(w))^{\transpose}$,
and 
\[
\dirp v:=\normp vv
\]
is the unit vector pointing in the $p$-adic direction of $v$ (the
projection of $v$ to the $p$-adic unit sphere).
\begin{defn}
\label{def:Iwasawa-Bruhat}The Iwasawa-Bruhat decomposition of 
\[
\sl 2\left(\FF\right)^{+}:=\sl 2\left(\RR\right)\times\sl 2\left(\QQ_{p}\right)^{+}
\]
is 
\[
\sl 2\left(\FF\right)^{+}=K_{\infty}A_{\infty}N_{\infty}\times\leftgrp_{p}A_{p}N_{p}=\exd{\left(K_{\infty}\times\leftgrp_{p}\right)}{:=\leftgrp}\exd{\left(A_{\infty}\times A_{p}\right)}{:=A}\exd{\left(N_{\infty}\times N_{p}\right)}{:=N}.
\]
\end{defn}

\subsection{Correspondence between primitive vectors in $\protect\ZZ\left[\frac{1}{p}\right]^{2}$
and matrices in $\protect\sl 2\left(\protect\ZZ\left[\frac{1}{p}\right]\right)$}

We now define natural subsets in the Bruhat-Iwasawa components. 
\begin{itemize}
\item For $\mathcal{D}\subset\FF$, we consider 
\[
N_{\mathcal{D}}=\left\{ \left(\left(\begin{smallmatrix}1 & \a\\
0 & 1
\end{smallmatrix}\right),\left(\begin{smallmatrix}1 & a\\
0 & 1
\end{smallmatrix}\right)\right)\in N_{\infty}\times N_{p}:\left(\a,a\right)\in\mathcal{D}\right\} ;
\]
\item for $R>1$, $t_{1}\leq t_{2}\in\ZZ$, let
\[
A_{R,t_{1},t_{2}}=\left\{ \left(\left(\begin{smallmatrix}\a & 0\\
0 & \a^{-1}
\end{smallmatrix}\right),\left(\begin{smallmatrix}p^{-t} & 0\\
0 & p^{t}
\end{smallmatrix}\right)\right)\in A_{\infty}\times A_{p}:\begin{array}{c}
1<\a\leq R\\
t_{1}\leq t\leq t_{2}
\end{array}\right\} ;
\]
\item for a real arc $\arc\subset\sphere 1$ and a $p$-adic arc $\arcp\subset\sphere 1_{p}$,
let
\[
\leftgrp_{\arc,\arcp}=\left\{ \left(\left(\begin{array}{cc}
\dir u & \dir u^{\perp}\end{array}\right),\left(\begin{array}{cc}
\dirp v & *\end{array}\right)\right)\in K_{\infty}\times\leftgrp:\begin{array}{c}
\dir u\in\arc\subset\sphere 1\\
\dirp v\in\arcp\subset\sphere 1_{p}
\end{array}\right\} .
\]
\end{itemize}
The following proposition establishes a 1-to-1 correspondence between
vectors in $\ZZ\left[\frac{1}{p}\right]_{\prim}^{2}$ and certain
matrices in $\sl 2\left(\ZZ\left[\frac{1}{p}\right]\right)$.
\begin{prop}
\label{prop: matrices representing  Z=00005B1/p=00005D primitive vectors }Let
$\mathcal{D}\subset\FF$ be a fundamental domain for the lattice $\ZZ\left[\frac{1}{p}\right]$
in $\FF$. 
\begin{enumerate}
\item \label{enu: 1 bijection}There is a bijection $v\leftrightarrow\ga_{v,D}$
between primitive $\ZZ\left[\frac{1}{p}\right]^{2,+}$ vectors and
$\sl 2\left(\ZZ\left[\frac{1}{p}\right]\right)^{+}$ matrices in $\leftgrp AN_{\mathcal{D}}$.
\item \label{enu: bijection with parameters}For $\mathcal{D}\subset\FF$,
$R>1$, $t_{1}\leq t_{2}\in\ZZ$ and arcs $\arc\subset\sphere 1$,
$\arcp\subset\sphere 1_{p}$, the bijection $v\leftrightarrow\ga_{v,D}$
restricts to being between 
\begin{enumerate}
\item primitive $\ZZ\left[\frac{1}{p}\right]^{2,+}$ vectors of real norm
$\norm v\leq R$, $p$-adic norm $p^{t_{1}}\leq\normp v\leq p^{t_{2}}$
and directions $\left(\dir v,\dirp v\right)\in\arc\times\arcp\subseteq\sphere 1\times\sphere 1_{p}$
\item $\sl 2\left(\ZZ\left[\frac{1}{p}\right]\right)^{+}$ matrices in $\leftgrp_{\arc\times\arcp}A_{R,t_{1},t_{2}}N_{\mathcal{D}}$.
\end{enumerate}
\end{enumerate}
\end{prop}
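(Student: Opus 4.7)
The plan is to construct the bijection $v \mapsto \gamma_{v,\mathcal{D}}$ directly from the explicit Iwasawa--Bruhat formulas for $\sl 2(\FF)^{+}$ derived above, and then read off part (2) mechanically. The idea is to extend a primitive $v$ to a matrix in $\sl 2(\ZZ[\tfrac{1}{p}])$ with first column $v$, and then exploit the translation ambiguity in the second column to pin down a unique representative whose $N$-component lies in $N_{\mathcal{D}}$.

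For the forward map, given $v \in \ZZ[\tfrac{1}{p}]^{2,+}_{\prim}$, primitivity yields some $w_{0} \in \ZZ[\tfrac{1}{p}]^{2}$ with $\det(v\,|\,w_{0}) = 1$, so $(v\,|\,w_{0}) \in \sl 2(\ZZ[\tfrac{1}{p}])^{+}$. Any other $\ZZ[\tfrac{1}{p}]$-integral completion of $v$ has the form $w_{0}+kv$ with $k \in \ZZ[\tfrac{1}{p}]$, since the kernel of $w \mapsto \det(v\,|\,w)$ on $\ZZ[\tfrac{1}{p}]^{2}$ is $\ZZ[\tfrac{1}{p}]\cdot v$. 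A direct inspection of (\ref{eq: Iwasawa SL(2,R)}) and (\ref{eq: Bruhat-Iwasawa SL2(Qp)}) shows that the $\leftgrp$ and $A$ components of $(v\,|\,w_{0})$ depend only on $v$, while the real and $p$-adic $N$-entries \emph{both} shift by exactly $k$ under $w_{0} \mapsto w_{0}+kv$. By Fact~\ref{fact: fund dom for SL2(Z=00005B1/p=00005D)}, there is a unique $k \in \ZZ[\tfrac{1}{p}]$ placing the resulting diagonal element of $\FF$ into $\mathcal{D}$; this $k$ defines $\gamma_{v,\mathcal{D}}$.

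For the inverse map, any $\gamma \in \sl 2(\ZZ[\tfrac{1}{p}])^{+} \cap \leftgrp A N_{\mathcal{D}}$ has first column $v \in \ZZ[\tfrac{1}{p}]^{2,+}$, and its second column supplies a $\ZZ[\tfrac{1}{p}]$-integral solution to the gcd equation of $v$ via $\det\gamma = 1$, so $v$ is primitive; injectivity of both directions then reduces to the uniqueness clause of Fact~\ref{fact: fund dom for SL2(Z=00005B1/p=00005D)}. Part (2) follows by reading parameters off the same formulas: the $K_{\infty}$-slot of $\leftgrp$ has first column $\dir v$, the $\leftgrp_{p}$-slot has first column $\dirp v$, the $A_{\infty}$-diagonal entry is $\norm v$, and the $A_{p}$-diagonal entry is $p^{-t} = \normp v^{-1}$; matching these against the definitions of $\leftgrp_{\arc,\arcp}$ and $A_{R,t_{1},t_{2}}$ yields the refined correspondence. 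The main obstacle I anticipate is the coordinated book-keeping at the two places in the forward map: Fact~\ref{fact: fund dom for SL2(Z=00005B1/p=00005D)} applies to the \emph{diagonal} $\ZZ[\tfrac{1}{p}]$-action on $\FF$, so one must verify that the $N_{\infty}$- and $N_{p}$-entries shift by the same scalar $k$ (and not by different scalars at the two places), which is precisely what makes a single normalization possible.
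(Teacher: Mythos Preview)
Your argument is correct and follows essentially the same route as the paper's proof: extend a primitive $v$ to a matrix in $\sl 2(\ZZ[\tfrac1p])$, identify the ambiguity in the second column as a single $\ZZ[\tfrac1p]$-orbit which translates the $N_\infty$- and $N_p$-entries by the \emph{same} scalar, and invoke the fundamental-domain property of $\mathcal{D}$ to select a unique representative. The only cosmetic discrepancy is that the paper takes $v^{\perp}=(b,-a)^{\mathrm t}$ rather than $v$ itself as the first column of $\gamma_{v,\mathcal D}$; your choice matches the displayed decompositions (\ref{eq: Iwasawa SL(2,R)}) and (\ref{eq: Bruhat-Iwasawa SL2(Qp)}) more directly and avoids the sign/half-plane bookkeeping that the $v^{\perp}$ convention introduces.
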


\begin{proof}[Proof of Proposition \ref{prop: matrices representing  Z=00005B1/p=00005D primitive vectors }]
If $a,b\in\ZZ\left[\frac{1}{p}\right]$ are such that $v=\left(a,b\right)$
is primitive, then there exist (infinitely many) solutions $\left(x,y\right)$
to the $\gcd$ equation of $v$ over $\ZZ\left[\frac{1}{p}\right]$,
$ax+by=1$. If $\left(x_{0},y_{0}\right)$ is such a solution, then
the set of all solutions is 
\[
\left\{ \left(x_{0},y_{0}\right)+m\left(-b,a\right):m\in\ZZ\left[\frac{1}{p}\right]\right\} .
\]
A choice of $m\in\ZZ\left[\frac{1}{p}\right]$ sets a unique solution
to this equation. The $\gcd$ equation of $v$ can also be written
in the form 
\[
\det\left(\left[\begin{smallmatrix}b & x\\
-a & y
\end{smallmatrix}\right]\right)=1,
\]
which is equivalent to setting 
\begin{align*}
v^{\perp} & =\left(b,-a\right)^{\transpose}\\
w & =\left(x,y\right)^{\transpose}
\end{align*}
and requiring that 
\[
\left[\begin{array}{cc}
v^{\perp} & w\end{array}\right]\in\sl 2\left(\ZZ\left[\frac{1}{p}\right]\right).
\]
The possibilities for the second column $w$ are all the solutions
to the $\gcd$ equation of $v$; the matrices obtained from the different
possibilities are 
\[
\left\{ \left[\begin{array}{cc}
v^{\perp} & w_{0}+mv^{\perp}\end{array}\right]:m\in\ZZ\left[\frac{1}{p}\right]\right\} 
\]
where $w_{0}=\left(x_{0},y_{0}\right)^{\transpose}$ is some solution.
This set of matrices is an orbit for the group $\left\{ \left[\begin{smallmatrix}1 & \ZZ\left[\frac{1}{p}\right]\\
0 & 1
\end{smallmatrix}\right]\right\} =N_{p}\cap\sl 2\left(\ZZ\left[\frac{1}{p}\right]\right)$ acting by right multiplication:
\[
\left\{ \left[\begin{array}{cc}
v^{\perp} & w_{0}\end{array}\right]\left[\begin{array}{cc}
1 & m\\
0 & 1
\end{array}\right]:m\in\ZZ\left[\frac{1}{p}\right]\right\} .
\]

According to (\ref{eq: Iwasawa SL(2,R)}) and (\ref{eq: Bruhat-Iwasawa SL2(Qp)}),
when viewing this set of matrices (via the diagonal embedding) in
$\sl 2\left(\FF\right)$, it equals 
\[
\left\{ \left(k_{\infty}a_{\infty}\left[\begin{array}{cc}
1 & \frac{\left\langle w,\hat{v}\right\rangle }{\norm v}+m\\
0 & 1
\end{array}\right],k_{p}a_{p}\left[\begin{array}{cc}
1 & \frac{y\left(w\right)}{y\left(v\right)}+m\\
0 & 1
\end{array}\right]\right):m\in\ZZ\left[\frac{1}{p}\right]\right\} .
\]
Since $\mathcal{D}$ is a fundamental domain for $\ZZ\left[\frac{1}{p}\right]^{\diag}$
in $\FF$, there exists a unique $m$ for which $\left(\frac{\left\langle w,\hat{v}\right\rangle }{\norm v}+m,\frac{y\left(w\right)}{y\left(v\right)}+m\right)$
lies in $\mathcal{D}$. This $m=m\left(\mathcal{D}\right)$ determines
a unique solution $w_{v,\mathcal{D}}$ which in turn determines a
unique matrix 
\[
\ga_{v,D}:=\left[\begin{array}{cc}
v^{\perp} & w_{v,\mathcal{D}}\end{array}\right]\in\sl 2\left(\ZZ\left[\frac{1}{p}\right]\right).
\]
This establishes a one to one correspondence $v\leftrightarrow\ga_{v,\mathcal{D}}$
between $\ZZ\left[\frac{1}{p}\right]^{2}$ primitive vectors and matrices
in  $\sl 2\left(\ZZ\left[\frac{1}{p}\right]\right)\cap\leftgrp AN_{\mathcal{D}}$,
and proves part \ref{enu: 1 bijection} of the proposition. As for
part \ref{enu: bijection with parameters}: the fact that $\left(\dir v,\dirp v\right)\in\arc\times\arcp$
if and only if $\pi_{K}\left(\ga_{v}\right)\in\leftgrp_{\arc,\arcp}$
follows from (\ref{eq: Iwasawa SL(2,R)}) and the definition of $\leftgrp_{\arc,\arcp}$,
and the fact that $\norm v\leq R$ and $p^{t_{1}}\leq\normp v\leq p^{t_{2}}$
if and only if $\pi_{A}\left(\ga_{v}\right)\in A_{R,t_{1},t_{2}}$
follows from (\ref{eq: Bruhat-Iwasawa SL2(Qp)}) and the definition
of $A_{R,t_{1},t_{2}}$. 
\end{proof}

\section{Counting lattice points inside well-rounded sets\label{sec:Counting-lattice-points-1}}

Proposition \ref{prop: matrices representing  Z=00005B1/p=00005D primitive vectors }
gives a reformulation of Theorem \ref{thm: A ed of directions} in
the form of counting matrices of the lattice $\sl 2\left(\ZZ\left[\frac{1}{p}\right]\right)$
inside the $S$-arithmetic group $\sl 2\left(\FF\right)$. The next
step on the way to prove Theorem \ref{thm: A ed of directions} is
to solve this lattice point counting problem. This is the topic of
this section. The method we will apply was established in \cite{GN12},
and it relies on ergodic theory. The corpus of work on equidistribution
and counting lattice points by dynamical methods is rather vast; for
a short survey on the applied techniques in the case of lattices in
real algebraic Lie groups, we refer to \cite[p.7]{GN12}. As for the
techniques in (as well as an introduction to) the $S$-arithmetic
setting, we refer to the survey \cite{GUILL2014}. 

\subsection{Well-roundedness and counting lattice points}

Proposition \ref{prop: matrices representing  Z=00005B1/p=00005D primitive vectors }
allows us to translate the question on the number of primitive $\ZZ\left[\frac{1}{p}\right]^{2,+}$
vectors of real norm $\norm v\leq R$, $p$-adic norm $p^{t_{1}}\leq\normp v\leq p^{t_{2}}$
and directions $\left(\dir v,\dirp v\right)\in\arc\times\arcp\subseteq\sphere 1\times\sphere 1_{p}$,
to the problem of counting $\sl 2\left(\ZZ\left[\frac{1}{p}\right]\right)^{+}$
matrices in $\leftgrp_{\arc\times\arcp}A_{R,t_{1},t_{2}}N_{\mathcal{D}}$,
where $\mathcal{D}\subset\FF$ is a fundamental domain for $\ZZ\left[\frac{1}{p}\right]$.
From now on we fix the fundamental domain from Fact \ref{fact: fund dom for SL2(Z=00005B1/p=00005D)},
\[
\mathcal{D}:=\left(-\half,\half\right]\times\ZZ_{p}.
\]
We now describe a method to approach this counting problem.
\begin{notation}
\label{nota group}In what follows, $\algrp$ will denote the product
$\prod_{v\in S}\algrp_{v}\left(\KK_{v}\right)$ where $\KK$ is a
number field, $\KK_{v}$ is the localization of $\KK$ over a place
$v$, $\algrp_{v}$ is a simple algebraic group defined over $\KK$
and $S$ is a finite set of places that contains $\infty$. 
\end{notation}

\begin{defn}
\label{def: well--roundedness}Let $\algrp$ be as in Notation \ref{nota group},
$\mu$ a Borel measure on $\algrp$, and $\left\{ \nbhd{\e}{}\right\} _{\e>0}$
a family of identity neighborhoods in $G$. 
\end{defn}

\begin{enumerate}
\item For a measurable subset $\Gset\subset\algrp$, we define 
\[
\Gset^{+}\left(\e\right):=\nbhd{\e}{}\Gset\nbhd{\e}{}=\bigcup_{u,v\in\nbhd{\e}{}}u\,\Gset\,v,
\]
\[
\Gset^{-}\left(\e\right):=\bigcap_{u,v\in\nbhd{\e}{}}u\,\Gset\,v.
\]
\item The set $\Gset$ is \emph{Lipschitz well-rounded (LWR)} with (positive)
parameters $\left(\mathcal{C},\e_{0}\right)$ if for every $0<\e<\e_{0}$
\begin{equation}
\mu\left(\Gset^{+}\left(\e\right)\right)\leq\left(1+\mathcal{C}\e\right)\:\mu\left(\Gset^{-}\left(\e\right)\right).\label{eq:LWReq}
\end{equation}
\item A family $\left\{ \Gset_{R}\right\} _{R>0}\subset\algrp$ of measurable
domains is Lipschitz well-rounded with positive parameters $\left(\mathcal{C},R_{0},\e_{0}\right)$
if for every $0<\e<\e_{0}$ and $R>R_{0}$, the set $\Gset_{R}$ is
LWR with $\left(\mathcal{C},\e_{0}\right)$. 
\end{enumerate}
\begin{rem}
Indeed the definition of LWR depends on the choice of a family $\left\{ \nbhd{\e}{}\right\} _{\e>0}$
of identity neighborhoods; however, we disregard this fact since we
will only work with a specific family, cf. Definition \ref{def: p-adic id neighborhoods}. 
\end{rem}

The notion of well-roundedness of a family is by now standard (see
e.g. \cite{EM93}), but it is less common to define a well-rounded
\emph{set}; however, this notion will be useful for us, as the sets
under our consideration will project to a well-rounded family in the
real component, but to a well-rounded set in the $p$-adic one. 
\begin{thm}[{\cite[Theorems 1.9, 4.5, and Remark 1.10]{GN12}}]
\label{thm: GN Counting thm}Let $\algrp$ as in Notation \ref{nota group}
and with Haar measure $\mu$, and let $\gam<\algrp$ be a lattice.
Assume that $\left\{ \Gset_{R}\right\} \subset\algrp$ is a family
of finite-measure domains which satisfy $\mu\left(\Gset_{R}\right)\to\infty$
as $T\to\infty$. If the family $\left\{ \Gset_{R}\right\} $ is Lipschitz
well-rounded, then there exists a parameter $\errexp\left(\gam\right)\in\left(0,\frac{1}{2\left(1+\dim\algrp\right)}\right)$
such that for $R$ large enough and every $\delta>0$:
\[
\left|\#\left(\Gset_{R}\cap\gam\right)-\frac{\mu\left(\Gset_{R}\right)}{\mu\left(\algrp/\gam\right)}\right|\underset{\algrp,\Gamma,\delta}{\ll}\text{const}\cdot\mu\left(\Gset_{R}\right)^{1-\errexp\left(\gam\right)+\delta}
\]
as $T\to\infty$, where $\mu\left(\algrp/\gam\right)$ is the measure
of a fundamental domain of $\gam$ in $\algrp$.
\end{thm}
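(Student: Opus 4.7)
The plan is to prove this by combining a quantitative mean ergodic theorem for the averaging operators $\pi(\beta_R)$ on $L^2(\algrp/\gam)$ with the Lipschitz well-roundedness hypothesis via a smoothing argument. I would carry it out in three stages.

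\textbf{Stage 1: automorphic unfolding.} For small $\epsilon>0$, let $\chi_\epsilon = \mathbf{1}_{\nbhd{\e}{}}/\mu(\nbhd{\e}{})$ with $\nbhd{\e}{}$ symmetric, and set $X_\epsilon(g\gam) = \sum_{\gamma \in \gam}\chi_\epsilon(g\gamma) \in L^2(\algrp/\gam)$. For $\epsilon$ small enough that $\gam\cap(\nbhd{\e}{}\nbhd{\e}{}^{-1})=\{e\}$, this $X_\epsilon$ takes only the values $0$ and $\mu(\nbhd{\e}{})^{-1}$; it satisfies $\int X_\epsilon\,d\bar g = 1$ and $\|X_\epsilon\|_{L^2}^2 = \mu(\nbhd{\e}{})^{-1}$. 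Let $\beta_R = \mathbf{1}_{\Gset_R}$. Unfolding one copy of $X_\epsilon$ against a fundamental domain in $\langle\pi(\beta_R)X_\epsilon,X_\epsilon\rangle$ and using bi-invariance of $\mu$ gives the sandwich
\[
\#\bigl(\Gset_R^{-}(\epsilon)\cap\gam\bigr) \;\leq\; \bigl\langle\pi(\beta_R)X_\epsilon,X_\epsilon\bigr\rangle \;\leq\; \#\bigl(\Gset_R^{+}(\epsilon)\cap\gam\bigr),
\]
where the lower bound uses $\gamma\in\Gset_R^-(\epsilon)\Rightarrow\nbhd{\e}{}\gamma\nbhd{\e}{}\subset\Gset_R$ and the upper bound uses $\gamma\notin\Gset_R^+(\epsilon)\Rightarrow\nbhd{\e}{}\gamma\nbhd{\e}{}\cap\Gset_R=\varnothing$.

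\textbf{Stage 2: quantitative mean ergodic theorem (the main obstacle).} Decompose $X_\epsilon = \mu(\algrp/\gam)^{-1} + X_\epsilon^{0}$ into constant plus mean-zero part. The constant part contributes exactly $\mu(\Gset_R)/\mu(\algrp/\gam)$ to the matrix coefficient above, producing the desired main term. The crux is the power-saving estimate
\[
\bigl\|\pi(\beta_R)\bigr\|_{L^2_0 \to L^2_0} \;\leq\; C\,\mu(\Gset_R)^{1-\kappa}
\]
for some $\kappa=\kappa(\gam)>0$, where $L^2_0$ denotes the mean-zero subspace. Because $\algrp$ is a product of simple algebraic groups over local fields, $L^2_0(\algrp/\gam)$ has a spectral gap and $K$-finite matrix coefficients lie in $L^{q+\delta}(\algrp)$ for a fixed $q=q(\gam)<\infty$ (via Howe--Moore, Cowling's bounds on complementary series in rank one, and Kazhdan's property $(T)$ in higher rank). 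Interpolating this $L^q$ decay of matrix coefficients against the trivial bound $\|\pi(\beta_R)\|\le\mu(\Gset_R)$, together with a Sobolev argument to pass from $K$-finite to arbitrary vectors in $L^2_0$, yields the displayed inequality with $\kappa$ expressible in terms of $q(\gam)$.

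\textbf{Stage 3: optimization against well-roundedness.} By Lipschitz well-roundedness, $\mu(\Gset_R^{\pm}(\epsilon)) = \mu(\Gset_R)(1+O(\epsilon))$, so the constant-part main term evaluated on $\Gset_R^{\pm}(\epsilon)$ differs from $\mu(\Gset_R)/\mu(\algrp/\gam)$ by at most $O(\epsilon\,\mu(\Gset_R))$, while the mean-zero contribution is bounded by $\|\pi(\beta_R)\|_{L^2_0}\|X_\epsilon\|_{L^2}^2 \leq C\mu(\Gset_R)^{1-\kappa}\mu(\nbhd{\e}{})^{-1}$. Combining these with the Stage 1 sandwich yields
\[
\bigl|\#(\Gset_R\cap\gam) - \mu(\Gset_R)/\mu(\algrp/\gam)\bigr| \;\ll\; \epsilon\,\mu(\Gset_R) \,+\, \mu(\nbhd{\e}{})^{-1}\mu(\Gset_R)^{1-\kappa},
\]
and since $\mu(\nbhd{\e}{})\asymp\epsilon^{\dim\algrp}$, choosing $\epsilon$ as a small negative power of $\mu(\Gset_R)$ balances the two terms and produces the stated exponent $\errexp(\gam) = \kappa/(1+\dim\algrp)$, explaining the upper bound $1/(2(1+\dim\algrp))$; the $+\delta$ absorbs logarithmic slack from the $L^{q+\delta}$ estimate. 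Stages 1 and 3 are formal consequences of well-roundedness and the structure of $X_\epsilon$, whereas Stage 2 is where all the analytic difficulty lies: converting a qualitative spectral gap into a uniform-in-$R$ quantitative operator-norm bound with an explicit rate depending on the lattice $\gam$.
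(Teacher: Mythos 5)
This statement is not proved in the paper at all: it is imported verbatim from Gorodnik--Nevo (Theorems 1.9, 4.5 and Remark 1.10 of \cite{GN12}), so there is no in-paper argument to compare against. Your outline does follow the strategy of the cited source: smoothing the lattice-counting function by an $\epsilon$-bump $X_\epsilon$ on $\algrp/\gam$, sandwiching via the sets $\Gset_R^{\pm}(\epsilon)$, invoking a quantitative mean ergodic theorem for $\pi(\beta_R)$ on the mean-zero subspace, and balancing $\epsilon$ against $\mu(\Gset_R)$ using the Lipschitz well-roundedness; your bookkeeping $\errexp(\gam)=\kappa/(1+\dim\algrp)$ and the resulting ceiling $\frac{1}{2(1+\dim\algrp)}$ are consistent with how the paper later extracts $\errexp_p=\frac{1}{28}$ in Remark \ref{rem: err exponent}.

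However, as a proof the proposal has a genuine gap, and it sits exactly where you say ``all the analytic difficulty lies.'' Stage 2 --- the uniform power-saving bound $\|\pi(\beta_R)\|_{L^2_0\to L^2_0}\le C\,\mu(\Gset_R)^{1-\kappa}$ for \emph{arbitrary} finite-measure Borel sets $\Gset_R$, with $\kappa$ depending only on $\gam$ --- is the substantive content of the cited theorems of \cite{GN12}, and you assert it rather than derive it. Howe--Moore gives only qualitative decay and yields no rate; what is actually needed is a uniform integrability exponent for the matrix coefficients of the automorphic representation $L^2_0(\algrp/\gam)$ (property $(\tau)$, or bounds toward Ramanujan in the $\sl 2$ case), followed by the tensor-power/interpolation argument that converts ``$K$-finite matrix coefficients in $L^{q+\delta}$'' into an operator-norm bound for $\pi(\beta_R)$ that is uniform over all Borel sets of a given measure; none of this is carried out, and the passage from $K$-finite vectors to all of $L^2_0$ in the $S$-arithmetic setting (where the finite places contribute compact open subgroups rather than a Sobolev theory) also needs an argument. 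A secondary, fixable slip: your Stage 1 sandwich bounds $\langle\pi(\beta_R)X_\epsilon,X_\epsilon\rangle$ between $\#(\Gset_R^{\mp}(\epsilon)\cap\gam)$, which by itself does not bound $\#(\Gset_R\cap\gam)$; the standard remedy is to apply the spectral estimate to the indicator functions of $\Gset_R^{+}(\epsilon)$ and $\Gset_R^{-}(\epsilon)$ (using $\Gset_R\subseteq(\Gset_R^{+}(\epsilon))^{-}(\epsilon)$ and $(\Gset_R^{-}(\epsilon))^{+}(\epsilon)\subseteq\Gset_R$), which is where the Lipschitz hypothesis $\mu(\Gset_R^{+}(\epsilon))\le(1+\mathcal{C}\epsilon)\mu(\Gset_R^{-}(\epsilon))$ actually enters.
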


\begin{rem}[\textbf{The error exponent}]
\label{rem: err exponent}The parameter $\errexp\left(\gam\right)$
depends on estimates on the rate of decay of matrix coefficients of
automorphic representations of $\algrp$. For $\algrp=\sl 2\left(\RR\right)\times\sl 2\left(\QQ_{p}\right)$,
any bound toward the generalized Ramanujan conjecture, from Gelbart-Jacquet
\cite{GelJac78} to Kim-Sarnak \cite{KimSar03}, implies that these
coefficients are $L^{q+}$ for some $2<q\leq4$ (see e.g.\ \cite{COH01}).
These bounds give $\errexp_{p}=\errexp(\sl 2\left(\ZZ\left[\frac{1}{p}\right]\right))=\frac{1}{4\left(1+\dim\left(\algrp\right)\right)}=\frac{1}{28}$,
and only the full Ramanujan conjecture would give a better exponent,
namely $\errexp_{p}=\frac{1}{2\left(1+\dim\left(\algrp\right)\right)}=\frac{1}{14}$.
This exponent is obtained by a combination of Theorems 1.9, 4.5 and
Definition 3.1 in \cite{GN12}.
\end{rem}

According to Theorem \ref{thm: GN Counting thm}, the goal of counting
lattice points inside $\leftgrp_{\arc\times\arcp}A_{R,t_{1},t_{2}}N_{D}$
will be achieved by establishing that these sets are Lipschitz well-rounded
w.r.t.\ a certain choice of identity neighborhoods in $\sl 2\left(\RR\right)\times\sl 2\left(\QQ_{p}\right)$.
The LWR of these sets reduces to the LWR of their projections to
both the real and to the $p$-adic components; the LWR of the real
component in known (see more details in the proof of Theorem \ref{thm: A ed of directions}
below), and so it remains to verify the LWR of the projection to the
$p$-adic part. To this end, we will consider the following identity
neighborhoods inside $\sl 2\left(\QQ_{p}\right)$: 
\begin{defn}
\label{def: p-adic id neighborhoods}For any subgroup $\BIcomp_{p}$
of $G_{p}=\sl 2\left(\QQ_{p}\right)$ and a positive integer $N$
we set
\[
\nbhd{p^{-N}}{}=\ker\left\{ G\left(\ZZ_{p}\right)\to G\left(\ZZ_{p}/p^{N}\ZZ_{p}\right)\right\} =\idmat 2+p^{N}\Mat 2\left(\ZZ_{p}\right)\subset K_{p}
\]
and
\[
\nbhd{p^{-N}}{\BIcomp_{p}}=\nbhd{p^{-N}}{}\cap\BIcomp_{p}\subset K_{p}\cap H_{p}.
\]
\end{defn}

Let us now state explicitly the LWR property for the $p$-adic factor.
We note that the LWR property is rather strong here, since the Lipschitz
constant equals zero. 
\begin{prop}
\label{prop: p-adic well roundedness of sets}Consider the family
$\Gset_{t_{1},t_{2}}=\left(\leftgrp_{p}\right)_{\arcp}\left(A_{p}\right)_{t_{1},t_{2}}\left(N_{p}\right)_{\a+p^{\psi}\ZZ_{p}}$
of subsets in $\sl 2\left(\QQ_{p}\right)$ with:
\begin{enumerate}
\item $\arcp\subseteq\sphere 1_{p}$ a fixed $p$-adic arc;
\item $\psi\in\ZZ$;
\item $t_{1}$ and $t_{2}$ two real parameters satisfying $t_{1}\leq t_{2}$.
\end{enumerate}
Then the family $\left\{ \Gset_{t_{1},t_{2}}\right\} _{t_{0}<t_{1}\leq t_{2}}$
for an arbitrary $t_{0}\in\RR$ is Lipschitz well-rounded, with Lipschitz
constant zero. 
\end{prop}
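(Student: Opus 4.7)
The strategy is to exploit the non-archimedean feature that the identity neighborhoods $\nbhd{p^{-N}}{}=\idmat 2+p^{N}\Mat 2(\ZZ_{p})$ are themselves compact open subgroups of $K_{p}=\sl 2(\ZZ_{p})$. I will prove the stronger rigid equality
\[
\nbhd{p^{-N}}{}\cdot\Gset_{t_{1},t_{2}}\cdot\nbhd{p^{-N}}{}\;=\;\Gset_{t_{1},t_{2}}
\]
for every $N$ above a threshold $N_{0}=N_{0}(\arcp,\a,\psi,t_{0})$ depending only on the fixed data (and not on $t_{1},t_{2}$). Since both $\Gset_{t_{1},t_{2}}^{+}(p^{-N})$ and $\Gset_{t_{1},t_{2}}^{-}(p^{-N})$ then coincide with $\Gset_{t_{1},t_{2}}$, the inequality (\ref{eq:LWReq}) holds trivially with Lipschitz constant zero.

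As preparation, I would first Bruhat-Iwasawa decompose the elements of $\nbhd{p^{-N}}{}$ themselves. For $h=\idmat 2+p^{N}X\in\nbhd{p^{-N}}{}$ one has $\absp{h_{11}}=1$, so $h\in G_{p}^{+}$ and a direct application of (\ref{eq: Bruhat-Iwasawa SL2(Qp)}) gives $h=q_{h}\cdot\idmat 2\cdot n_{h}$, where $q_{h}\in\leftgrp_{p}$ has unit part in $1+p^{N}\ZZ_{p}$ and lower-left entry in $p^{N}\ZZ_{p}$, and $n_{h}\in N_{p}$ has parameter in $p^{N}\ZZ_{p}$. In particular, the $A_{p}$-component of every element of $\nbhd{p^{-N}}{}$ is trivial, which is exactly what one expects since $\nbhd{p^{-N}}{}\subset K_{p}$.

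Now fix $g=qan\in\Gset_{t_{1},t_{2}}$ with Bruhat-Iwasawa coordinates $q=\bigl(\begin{smallmatrix}u & 0\\ m & u^{-1}\end{smallmatrix}\bigr)$, $a=\diag(p^{-t},p^{t})$, $n=\bigl(\begin{smallmatrix}1 & \b\\ 0 & 1\end{smallmatrix}\bigr)$; so $(u,m)\in\arcp$, $t_{1}\leq t\leq t_{2}$, and $\b\in\a+p^{\psi}\ZZ_{p}$. For left multiplication, $hg=g+p^{N}Xg$, and reading off the first column one checks that (i) $\normp{\cdot}$ of the first column is unchanged, so $t$ is preserved; (ii) the $p$-adic direction is perturbed by an element of $p^{N}\ZZ_{p}^{2}$, hence remains in $\arcp$ as soon as $N$ exceeds its radius-exponent; (iii) a direct expansion yields
\[
\b_{hg}\;=\;\b\;+\;\frac{p^{N+2t}X_{12}u^{-1}}{u+p^{N}(X_{11}u+X_{12}m)},
\]
whose correction lies in $p^{N+2t}\ZZ_{p}\subseteq p^{N+2t_{0}}\ZZ_{p}$ (using $t\geq t_{1}>t_{0}$), which sits inside $p^{\psi}\ZZ_{p}$ once $N\geq\psi-2t_{0}$.

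For right multiplication I would write $gh=q\,a\,(nq_{h})\,n_{h}$, Bruhat-Iwasawa decompose $nq_{h}$ --- which lies in $G_{p}^{+}$ once $p^{-N}\absp{\b}<1$, and this holds uniformly over the family because $\absp{\b}$ is bounded by the fixed constant $\max(\absp{\a},p^{-\psi})$ --- and then commute the resulting $A$-piece past $a$. All the small perturbations that appear are rational expressions whose denominators are $p$-adic units, so an analogous bookkeeping shows that the direction, $t$, and $N_{p}$-parameter of $gh$ remain in the prescribed sets once $N\geq N_{0}$. The main obstacle is exactly this right-multiplication bookkeeping: since $n$ and $q_{h}$ do not commute, their product requires two successive rounds of Bruhat-Iwasawa cleanup (first for $nq_{h}$, then for the reshuffling of the diagonal piece past $q$), and one must verify at each step that the $p$-adic valuations of the errors stay large enough. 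The crucial point throughout is that the uniform a priori bound on $\absp{\b}$, depending only on $\a$ and $\psi$, prevents any denominator from blowing up and makes the threshold $N_{0}$ genuinely independent of $t_{1},t_{2}$.
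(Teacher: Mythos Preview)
Your proposal is correct and follows essentially the same line as the paper: both arguments exploit the ultrametric fact that for $\varepsilon$ below a threshold depending only on $\Theta_p$, $\alpha$, $\psi$ and $t_0$, one has the rigid equality $\mathcal{O}_{\varepsilon}\,\mathcal{B}_{t_1,t_2}\,\mathcal{O}_{\varepsilon}=\mathcal{B}_{t_1,t_2}$, whence the Lipschitz constant is zero. The only difference is packaging: the paper first isolates an ``Effective Bruhat--Iwasawa decomposition'' $\mathcal{O}_{\varepsilon}qan\mathcal{O}_{\varepsilon}\subseteq q\,\mathcal{O}^{Q_p}_{c\varepsilon}\,a\,\mathcal{O}^{N_p}_{c\varepsilon}\,n$ via abstract conjugation lemmas (operator norm of $\mathrm{Ad}$), and then checks that $\mathcal{O}^{Q_p}_{c\varepsilon}$ and $\mathcal{O}^{N_p}_{c\varepsilon}$ are absorbed by $(Q_p)_{\Theta_p}$ and $(N_p)_{\alpha+p^{\psi}\mathbb{Z}_p}$, whereas you compute the perturbed Bruhat--Iwasawa coordinates of $hg$ and $gh$ directly. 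One small inaccuracy in your right-multiplication sketch: once $N$ is large enough the $A_p$-component of $nq_h$ is trivial, so what actually has to be commuted past $a$ is the $Q_p$-piece $q'$ (giving $aq'a^{-1}$ with lower-left entry in $p^{N+2t}\mathbb{Z}_p$), and this step---not only the bound on $|\beta|_p$---is where the lower bound $t>t_0$ is again needed.
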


The proof of Proposition \ref{prop: p-adic well roundedness of sets}
is quite technical, we postpone it to the end of the paper; see Section
\ref{sec: Proof of p-adic LWR}.\textcolor{red}{{} }Note that in the
following proof of Theorem \ref{thm: A ed of directions}, we will
only need the very special case where $t_{1}=t_{2}=0$. In this case
the family is indeed reduced to a single set!

We now have all the tools to prove Theorem \ref{thm: A ed of directions}. 
\begin{proof}[Proof of Theorem \ref{thm: A ed of directions}]
We first note that according to Lemma \ref{fact: primitive vectors over Z=00005B1/p=00005D},
\[
\ZZ_{\prim}^{2}=\ZZ\left[\frac{1}{p}\right]_{\prim}^{2}\cap\sphere 1_{p}.
\]
It then follows from Proposition \ref{prop: matrices representing  Z=00005B1/p=00005D primitive vectors }
that there is a bijection between the sets 
\[
\left\{ v\in\ZZ_{\prim}^{2}:\,\begin{matrix}\left(\dir v,v\right)\in\arc\times\arcp,\\
\norm v\leq R
\end{matrix}\right\} \longleftrightarrow\left\{ \ga\in\sl 2\left(\ZZ\left[\frac{1}{p}\right]\right)\cap\left(\leftgrp_{\arc\times\arcp}A_{R,0,0}N_{\left[-\frac{1}{2},\frac{1}{2}\right]\times\ZZ_{p}}\right)\right\} .
\]
The sets $\leftgrp_{\arc\times\arcp}A_{R,0,0}N_{\left[-\frac{1}{2},\frac{1}{2}\right]\times\ZZ_{p}}$
are a product of a real factor $\left(K_{\infty}\right)_{\arc}\left(A_{\infty}\right)_{R}\left(N\right)_{\left[-\frac{1}{2},\frac{1}{2}\right]}$
and a $p$-adic factor $\left(\leftgrp_{p}\right)_{\arcp}\left(A_{p}\right)_{0,0}\left(N_{p}\right)_{\ZZ_{p}}$;
the family of projections to real component is LWR by \cite[Theorem 1.1]{HN_Counting},
and the projection to the $p$-adic part is LWR according to Proposition
\ref{prop: p-adic well roundedness of sets}. Since a product of LWR
sets is LWR \cite[Corollary 4.3 and Remark 4.4]{HK_WellRoundedness},
it follows that the family $\left\{ \leftgrp_{\arc\times\arcp}A_{R,0,0}N_{\left[-\frac{1}{2},\frac{1}{2}\right]\times\ZZ_{p}}\right\} _{R>0}$
is LWR. In the notations of Theorem \ref{thm: GN Counting thm}, let
$\errexp_{p}=\errexp\left(\sl 2\left(\ZZ\left[\frac{1}{p}\right]\right)\right)=\frac{1}{4\left(1+\dim\left(G\right)\right)}$,
see Remark \ref{rem: err exponent}. By Theorem \ref{thm: GN Counting thm},
\begin{align*}
 & \#\left\{ \ga\in\sl 2\left(\ZZ\left[\frac{1}{p}\right]\right)\cap\left(\leftgrp_{\arc\times\arcp}A_{R,0,0}N_{\left[-\frac{1}{2},\frac{1}{2}\right]\times\ZZ_{p}}\right)\right\} \\
= & \frac{\mu_{\sl 2\left(\FF\right)}\left(\leftgrp_{\arc\times\arcp}A_{R,0,0}N_{\left[-\frac{1}{2},\frac{1}{2}\right]\times\ZZ_{p}}\right)}{\mu_{\sl 2\left(\FF\right)}\left(\sl 2\left(\FF\right)/\sl 2\left(\ZZ\left[\frac{1}{p}\right]\right)\right)}+O\left(\mu\left(\leftgrp_{\arc\times\arcp}A_{R,0,0}N_{\left[-\frac{1}{2},\frac{1}{2}\right]\times\ZZ_{p}}\right)^{1-\errexp_{p}+\delta}\right)\\
= & \frac{\mu_{\sl 2\left(\RR\right)}\left(\left(K_{\infty}\right)_{\arc}\left(A_{\infty}\right)_{R}\left(N_{\infty}\right)_{\left[-\frac{1}{2},\frac{1}{2}\right]}\right)}{\mu_{\sl 2\left(\RR\right)}\left(\sl 2\left(\RR\right)/\sl 2\left(\ZZ\right)\right)}\cdot\frac{\mu_{\sl 2\left(\QQ_{p}\right)}\left(\left(\leftgrp_{p}\right)_{\arcp}\left(A_{p}\right)_{0,0}\left(N_{p}\right)_{\ZZ_{p}}\right)}{\mu_{\sl 2\left(\QQ_{p}\right)}\left(\sl 2\left(\ZZ_{p}\right)\right)}\\
 & +O\left(\mu\left(\leftgrp_{\arc\times\arcp}A_{R,0,0}N_{\left[-\frac{1}{2},\frac{1}{2}\right]\times\ZZ_{p}}\right)^{1-\errexp_{p}+\delta}\right),
\end{align*}
where the last equality was deduced using Fact \ref{fact: fund dom for SL2(Z=00005B1/p=00005D)}.
The main term is a product of two factors, where the first one equals
\[
\frac{\leb(\arc)R^{2}}{\pi^{2}/6},
\]
by (\ref{eq: Iwasawa measure}). We turn to compute the second factor.
By Lemma \ref{fact: Haar measure on SL2(Q_p)} and Fact \ref{fact: p-adic half-sphere is Q},
\[
\mu_{\sl 2\left(\QQ_{p}\right)}\left(\left(K_{p}\right)_{\arcp}\left(A_{p}\right)_{0,0}\left(N_{p}\right)_{\ZZ_{p}}\right)=\measp^{2}\left(\arcp\right)\cdot1\cdot\measp\left(\ZZ_{p}\right)=\measp^{2}\left(\arcp\right).
\]
Furthermore, as noted in Fact \ref{fact: p-adic half-sphere is Q},
we have $\mu_{\sl 2\left(\QQ_{p}\right)}\left(\sl 2\left(\ZZ_{p}\right)\right)=\measp^{2}\left(\sphere{1,+}_{p}\right)$.
Then we may conclude: 
\[
\#\left\{ v\in\ZZ_{\prim}^{2}:\,\begin{matrix}\left(\dir v,v\right)\in\arc\times\arcp,\\
\norm v\leq R
\end{matrix}\right\} =\frac{6}{\pi^{2}}\frac{\leb(\arc)\measp^{2}\left(\arcp\right)}{\measp^{2}\left(\sphere{1,+}_{p}\right)}\cdot R^{2}+O\left(R^{2}\right)^{1-\errexp_{p}+\delta}.
\]
\end{proof}

\section{Equidistribution of Iwasawa components in the $S$-arithmetic case\label{sec:Equidistribution-of-Iwasawa}}

We have used a special case of Proposition \ref{prop: p-adic well roundedness of sets}
to prove Theorem \ref{thm: A ed of directions}. The full proposition
(together with Thorem \ref{thm: GN Counting thm}) implies a stronger
equidistribution result: the equidistribution of both $Q$ and $N$
components of the Bruhat-Iwasawa decomposition for an $S$-arithmetic
lattice, in product of $\sl 2$'s. Let us begin by introducing this
set up. 

So far we have only dealt with the group $\sl 2\left(\RR\right)\times\sl 2\left(\QQ_{p}\right)$,
meaning the case of one infinite place, and one finite place. But
in fact, we can allow any finite number of finite or infinite places,
and consider the group 
\[
G=\left(\sl 2\left(\RR\right)\right)^{n_{\infty}}\times\prod_{p\in S_{f}}\left(\sl 2\left(\QQ_{p}\right)\right)^{n_{p}},
\]
where $S_{f}$ is a finite set of primes. The notion of arithmetic
lattice, which generalizes $\sl 2\left(\ZZ\left[\frac{1}{p}\right]\right)<\sl 2\left(\RR\right)\times\sl 2\left(\QQ_{p}\right)$,
is described e.g. in \cite[Sec. 5.4]{PPR_92}. As noted in \cite[Rem. 4.6]{GN12},
the ergodic method of Gorodnik and Nevo applies to these lattices.
The analogous sets to $\leftgrp_{\arc\times\arcp}A_{R,t_{1},t_{2}}N_{\Psi_{\infty}\times\Psi_{p}}\subset\sl 2\left(\FF\right)$
inside $G$ are the following. For $1\leq i\leq n_{\infty}$, let
$\arc_{i}\subset\sphere 1$ be arcs on the unit circle, $\Nset_{i}\subset\RR$
intervals, and $R_{i}\geq1$ positive real numbers. Set 
\[
\underline{R}=(R_{1},\ldots,R_{n_{\infty}}),
\]
and consider 
\[
B_{\underline{R}}^{\text{Iw}}:=\prod_{i=1}^{n_{\infty}}\left(K_{\infty}\right)_{\arc_{i}}\left(A_{\infty}\right)_{R_{i}}\left(N_{\infty}\right)_{\Nset_{i}}\times\prod_{p\in S_{f}}\left(\leftgrp_{p}\right)_{\arcp}^{n_{p}}\left(A_{p}\right)_{t_{1}^{p},t_{2}^{p}}^{n_{p}}\left(N_{p}\right)_{\a_{p}+p^{\psi_{p}}\ZZ_{p}}^{n_{p}}
\]
where for every $p\in S_{f}$:
\begin{enumerate}
\item $\arcp\subseteq\sphere 1_{p}$ is a fixed $p$-adic arc,
\item $\psi_{p}\in\ZZ$,
\item $t_{1}^{p}$ and $t_{2}^{p}$ are two real parameters that satisfy
$t_{1}^{p}\leq t_{2}^{p}$ and are bounded from below, namely there
exist $t_{0}^{p}\in\RR$ such that $t_{0}^{p}<t_{1}^{p}\leq t_{2}^{p}$.
\end{enumerate}
Let $\mu$ denote a Haar measure on $G$.

\begin{thm}
\label{thm: Equidistribution Bruhat-Iwasawa}Let $\gam<G$ be an $S$-arithmetic
lattice, and $\norm{\cdot}$ any norm on $\RR^{n_{\infty}}$. Then,
for $\errexp=\frac{1}{4\left(1+3\left(n_{\infty}+\sum n_{p}\right)\right)}$,
 the following asymptotic formula holds for every $\dl>0$ as $\norm{\underline{{R}}}\to\infty$
:
\[
\#\left\{ \left(B_{\underline{R}}^{\text{Iw}}\right)\cap\gam^{+}\right\} =\frac{\mu\left(B_{\underline{R}}^{\text{Iw}}\right)}{\mu\left(G/\gam\right)}+O\left(\mu\left(B_{\underline{R}}^{\text{Iw}}\right)^{1-\errexp+\dl}\right)
\]
\begin{eqnarray*}
 & = & \frac{6}{\pi^{2}}\cdot\prod_{i=1}^{n_{\infty}}\left(\leb_{\sphere 1}(\arc_{i})\leb_{\RR}(\Nset_{i})\cdot R_{i}^{2}\right)\cdot\prod_{p\in S_{f}}\left(\frac{\measp^{2}\left(\arcp\right)\left(\sum_{t=t_{1}^{p}}^{t_{2}^{p}}p^{-2t}\right)\left(1-p^{-\psi_{p}}\right)}{1-\frac{1}{p}}\right)^{n_{p}}\\
 &  & +O\left((\prod_{i=1}^{n_{\infty}}R_{i}^{2})^{1-\errexp+\dl}\right).
\end{eqnarray*}
The implied constant depends on $\arc$, $\Nset$, and $\psi_{p}$,
$\arcp$, $t_{1}^{p}$, $t_{2}^{p}$ for every $p$. 
\end{thm}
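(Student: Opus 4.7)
\medskip

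\noindent\textbf{Proof plan for Theorem~\ref{thm: Equidistribution Bruhat-Iwasawa}.}
The overall strategy is to apply the Gorodnik--Nevo lattice point counting theorem (Theorem~\ref{thm: GN Counting thm}), in the $S$-arithmetic form pointed out in \cite[Rem.~4.6]{GN12}, to the family of sets $\{B_{\underline{R}}^{\text{Iw}}\}_{\underline{R}}$. This reduces the theorem to two independent tasks: (i) showing that $\{B_{\underline{R}}^{\text{Iw}}\}$ is Lipschitz well-rounded in $G$, and (ii) computing $\mu(B_{\underline{R}}^{\text{Iw}})$ and the covolume $\mu(G/\gam)$ explicitly. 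Once both are in hand, the error exponent $\errexp=\frac{1}{4(1+\dim G)}=\frac{1}{4(1+3(n_\infty+\sum n_p))}$ predicted by Theorem~\ref{thm: GN Counting thm} combined with the bounds towards Ramanujan discussed in Remark~\ref{rem: err exponent} yields precisely the stated rate of convergence.

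The Lipschitz well-roundedness of the family is carried out factor by factor. For each archimedean factor, the family $\{(K_\infty)_{\arc_i}(A_\infty)_{R_i}(N_\infty)_{\Nset_i}\}_{R_i}$ is LWR as $R_i\to\infty$ by the real Iwasawa LWR result of \cite[Theorem~1.1]{HN_Counting}. For each $p$-adic factor, the set $(\leftgrp_p)_{\arcp}^{n_p}(A_p)_{t_1^p,t_2^p}^{n_p}(N_p)_{\a_p+p^{\psi_p}\ZZ_p}^{n_p}$ is itself a product of sets of the form treated in Proposition~\ref{prop: p-adic well roundedness of sets}, each of which is LWR with Lipschitz constant zero. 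Since a product of LWR sets (respectively of LWR families) is again LWR, by \cite[Corollary~4.3 and Remark~4.4]{HK_WellRoundedness}, we conclude that the entire family $\{B_{\underline{R}}^{\text{Iw}}\}$ is LWR with respect to the product identity neighbourhoods built from the archimedean ones in \cite{HN_Counting} and the $p$-adic ones from Definition~\ref{def: p-adic id neighborhoods}. It is at this step that the flexibility in Proposition~\ref{prop: p-adic well roundedness of sets} (allowing an arbitrary arc $\arcp$, an arbitrary $N$-ball $\a_p+p^{\psi_p}\ZZ_p$, and a range $t_1^p\le t\le t_2^p$ bounded below by some fixed $t_0^p$) becomes essential, whereas in the proof of Theorem~\ref{thm: A ed of directions} only the degenerate case $t_1^p=t_2^p=0$ was needed.

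For the volume computation, the Haar measure $\mu$ factors as a product of Haar measures on each $\sl 2(\RR)$ and $\sl 2(\QQ_p)$ factor, so that $\mu(B_{\underline{R}}^{\text{Iw}})$ equals a product over the places in $S$. On each archimedean factor, the Iwasawa measure formula (\ref{eq: Iwasawa measure}) gives a contribution proportional to $\leb_{\sphere 1}(\arc_i)\,\leb_\RR(\Nset_i)\,R_i^{2}$, exactly as in the final computation in the proof of Theorem~\ref{thm: A ed of directions}. On each $p$-adic factor, the Bruhat measure formula from Lemma~\ref{fact: Haar measure on SL2(Q_p)}, together with the identification of $\leftgrp_p$ with $\sphere 1_p^{+}$ from Fact~\ref{fact: p-adic half-sphere is Q} and the definition of $\measp^2$, gives a contribution of the form $\measp^2(\arcp)\cdot\bigl(\sum_{t=t_1^p}^{t_2^p}p^{-2t}\bigr)\cdot\measp(\a_p+p^{\psi_p}\ZZ_p)$, raised to the power $n_p$. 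The covolume $\mu(G/\gam)$ is the corresponding product of local covolumes: the $\pi^2/6$ factor comes from $\mu_{\sl 2(\RR)}(\sl 2(\RR)/\sl 2(\ZZ))$, and each $p$-adic factor contributes $\mu_{\sl 2(\QQ_p)}(\sl 2(\ZZ_p))=1-\tfrac{1}{p}$, as noted after Lemma~\ref{fact: Haar measure on SL2(Q_p)}. Putting the pieces together produces the explicit main term in the statement.

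The main obstacle is really concentrated in Proposition~\ref{prop: p-adic well roundedness of sets}, whose proof is postponed to Section~\ref{sec: Proof of p-adic LWR}: one must show that for every sufficiently small $\e=p^{-N}$, conjugation and multiplication by $\nbhd{p^{-N}}{G_p}$ preserves the Bruhat-Iwasawa product structure of the set, so that $\Gset_{t_1,t_2}^+(\e)=\Gset_{t_1,t_2}^-(\e)$ on the nose. Given this, everything else follows from routine product arguments for LWR (already packaged in \cite{HK_WellRoundedness}) and direct measure calculations; the translation into a counting statement for primitive vectors in an $S$-arithmetic setting (as alluded to in Remark~\ref{rem: S-arithmetic prim vectors}) is obtained by specialising the parameters $\arcp$, $\psi_p$, $t_1^p$, $t_2^p$ in exactly the way that the proof of Theorem~\ref{thm: A ed of directions} specialises them for a single prime.
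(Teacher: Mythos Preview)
Your proposal is correct and follows essentially the same route as the paper's own proof: establish LWR of $\{B_{\underline R}^{\text{Iw}}\}$ factor by factor via \cite[Theorem~1.1]{HN_Counting} (archimedean) and Proposition~\ref{prop: p-adic well roundedness of sets} ($p$-adic), combine using the product-LWR result of \cite{HK_WellRoundedness}, apply Theorem~\ref{thm: GN Counting thm}, and read off the main term from the measure formulas~(\ref{eq: Iwasawa measure}) and Lemma~\ref{fact: Haar measure on SL2(Q_p)}. The paper's proof is considerably more terse but structurally identical; your additional remarks on the covolume factors and on where the full strength of Proposition~\ref{prop: p-adic well roundedness of sets} is actually used are accurate elaborations rather than a different argument.
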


The exponent $\tau$, as discussed in Remark \ref{rem: err exponent},
would be improved to $\errexp=\frac{1}{2\left(1+3\left(n_{\infty}+\sum n_{p}\right)\right)}$
with the full Ramanujan conjecture.
\begin{rem}
If $t_{2}^{p}$  grows to infinity, then the $O$-constant does not
depend on it. 
\end{rem}

\begin{proof}
We first note that the family $\left\{ B_{\underline{R}}^{\text{Iw}}\right\} $
is LWR, since it is a product of the projections $\left(K_{\infty}\right)_{\arc_{i}}\left(A_{\infty}\right)_{R_{i}}\left(N\right)_{I_{i}}$
to the real components, which are LWR according to \cite[Theorem 1.1]{HN_Counting},
and the projections $\left(\leftgrp_{p}\right)_{\arcp}\left(A_{p}\right)_{t_{1}^{p},t_{2}^{p}}\left(N_{p}\right)_{\a_{p}+p^{\psi_{p}}\ZZ_{p}}$
to the finite components, which are LWR according to Proposition \ref{prop: p-adic well roundedness of sets}.
Now the result follows from Theorem \ref{thm: GN Counting thm}, combined
with the fact that the $\mu_{\sl 2(\RR)}^{\times n_{\infty}}\times\prod_{p}\mu_{\sl 2(\QQ_{p})}$-volume
of $B_{\underline{R}}^{\text{Iw}}$ is the expression appearing in
the main term, according to \ref{eq: Iwasawa measure} and to Lemma
\ref{fact: Haar measure on SL2(Q_p)}. 
\end{proof}
\begin{rem}
\label{rem: S-arithmetic prim vectors}We note that when considering
the more general $S$-arithmetic setting as in Theorem \ref{thm: Equidistribution Bruhat-Iwasawa},
we obtain a generalization of Theorem \ref{thm: A ed of directions}
to joint equidistribution of several $p$-adic directions of primitive
vectors. For this, one should apply Theorem \ref{thm: Equidistribution Bruhat-Iwasawa}
to the lattice $\gam=\sl 2\left(\ZZ[\{\frac{1}{p}\}_{p\in S_{f}}]\right)$.
\end{rem}

\begin{rem}
\begin{enumerate}
\item Theorem \ref{thm: Equidistribution Bruhat-Iwasawa} is in fact an
equidistribution  results, since it is completely standard to pass
from a counting formulation to an equidistribution formulation; see,
for example, the proof of Theorem \ref{thm: A ed of directions}.
\item Equisitribution of Iwasawa components, and especially of the $N$-component,
has been considered in a number of papers, e.g. \cite{Good83,RR09,Tru13,Mar_Vino_15,HN_Counting}.
All of the above in rank one real Lie groups; in higher rank, we mention
\cite{HK_gcd}. We are unaware of equidistribution results of the
Iwasawa components of lattice elements in the $S$-arithmetic setting.
\item For more equidistribution and counting results in the $S$-arithmetic
or Adelic settings, we refer to \cite{COH01}, \cite{ELMV_09}, \cite{Benoist_Oh_12}
and \cite{GN12}. 
\end{enumerate}
\end{rem}

\section{Proof of Well-roundedness in $\protect\sl 2\left(\protect\QQ_{p}\right)$\label{sec: Proof of p-adic LWR}}

The goal of this final subsection is to prove Proposition \ref{prop: p-adic well roundedness of sets},
i.e. the well-roundedness of the sets $\left(\leftgrp_{p}\right)_{\arcp}\left(A_{p}\right)_{t_{1},t_{2}}\left(N_{p}\right)_{\a+p^{\psi}\ZZ_{p}}$
in $\sl 2\left(\QQ_{p}\right)$. The main step is a measurement of
how the Bruhat components are modified by a small left or right perturbation.
To state this proposition, we need an additional notation: for any
$g\in\sl 2\left(\QQ_{p}\right)$, we denote by $\|\Ad g\|_{op}$ the
operator norm of $\Ad g$ acting on $Mat_{2}(\QQ_{p})$. Note that
this operator norm takes values in $p^{\ZZ}$, as the max norm on
$\QQ_{p}^{2}$.
\begin{prop}[\textbf{Effective Bruhat-Iwasawa decomposition}]
\label{prop: Effective Iwasawa decomposition}Let $g=qan\in\sl 2^{+}\left(\QQ_{p}\right)$
 with $a=\left[\begin{smallmatrix}p^{-t} & 0\\
o & p^{t}
\end{smallmatrix}\right]$. Let $c\left(a,n\right)=\left\Vert \Ad n\right\Vert _{\text{op}}\max(p^{-t},1)\in p^{\ZZ}$.
The function $c$ is bounded when $n$ is restricted to a bounded
set and $t$ is bounded from below Moreover, we have 
\[
\nbhd{\e}{G_{p}}qan\nbhd{\e}{G_{p}}\in q\nbhd{c\left(a,n\right)\cdot\e}{\leftgrp_{p}}a\nbhd{c\left(a,n\right)\cdot\e}{N_{p}}n
\]
when $\e\in p^{-\NN}$ is small enough. 
\end{prop}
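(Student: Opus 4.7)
The plan is to reduce the two-sided perturbation to a one-sided decomposition of a small perturbation of $a$. The two driving ingredients are the normality of the principal congruence subgroup $\nbhd{\e}{G_{p}}$ in $K_{p}=\sl 2(\ZZ_{p})$, and the definition of the operator norm $\|\Ad n\|_{\mathrm{op}}$.

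First, I slide $u_{1}$ through $q$: since $\nbhd{\e}{G_{p}}=\idmat 2+p^{N}\Mat 2(\ZZ_{p})$ is a principal congruence subgroup and $q\in\leftgrp_{p}\subset K_{p}$, the conjugate $v_{1}:=q^{-1}u_{1}q$ still lies in $\nbhd{\e}{G_{p}}$, so $u_{1}q=qv_{1}$. Next, I slide $u_{2}$ through $n$: writing $u_{2}=\idmat 2+p^{N}Y$ with $Y\in\Mat 2(\ZZ_{p})$, the inequality $\|nYn^{-1}\|\leq\|\Ad n\|_{\mathrm{op}}\,\|Y\|$ yields $\tilde{u}_{2}:=nu_{2}n^{-1}\in\nbhd{\e\|\Ad n\|_{\mathrm{op}}}{G_{p}}$, provided $\e$ is small enough that this radius is at most $1/p$ (making it a proper subgroup of $K_{p}$). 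We then have
\[
u_{1}qanu_{2}\,=\,q\,(v_{1}\,a\,\tilde{u}_{2})\,n,
\]
so it suffices to decompose $v_{1}a\tilde{u}_{2}$ as $\tilde{q}\,a\,\tilde{n}$ with $\tilde{q}\in\nbhd{c\e}{\leftgrp_{p}}$ and $\tilde{n}\in\nbhd{c\e}{N_{p}}$.

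For the last step, I would compute the Bruhat-Iwasawa decomposition of $v_{1}a\tilde{u}_{2}$ directly from its four matrix entries. With $v_{1}=\idmat 2+p^{N}X$ and $\tilde{u}_{2}$ a perturbation of the identity at scale $\e\|\Ad n\|_{\mathrm{op}}$, an entrywise expansion combined with the ultrametric inequality shows that the $p$-adic norm of the $(1,1)$ entry of $v_{1}a\tilde{u}_{2}$ remains equal to $p^{t}$, so the $A_{p}$ component of the decomposition is exactly $a$. The $\leftgrp_{p}$ and $N_{p}$ components are then read off from (\ref{eq: Bruhat-Iwasawa SL2(Qp)}): the superdiagonal entry of $\tilde{n}$ equals $(1,2)/(1,1)$, and the subdiagonal of $\tilde{q}$ equals $((2,1)/(1,1))\,u_{a}$, while the diagonal perturbation of $\tilde{q}$ is the $p$-adic unit $(1,1)\,p^{t}-1$. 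Bounding these quantities and tracking how conjugation by $a=\diag(p^{-t},p^{t})$ rescales the off-diagonal entries of $v_{1}$ and $\tilde{u}_{2}$ produces the factor $\max(p^{-t},1)$, which combines with $\|\Ad n\|_{\mathrm{op}}$ to give the claimed scale $c(a,n)\,\e$.

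The main obstacle is the entrywise bookkeeping in this final step, especially isolating the correct power of $p^{-t}$ in the $(2,1)$ and $(1,2)$ off-diagonal bounds --- the directions on which conjugation by $a$ acts nontrivially. The inclusion then holds for $\e$ small enough that $\e\,c(a,n)\leq 1/p$, which is achievable because the hypotheses (bounded $n$, $t$ bounded from below) ensure $c(a,n)$ is itself bounded.
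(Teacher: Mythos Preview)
Your outline is correct and your first two moves coincide with the paper's: conjugate the left perturbation through $q\in K_p$ (using normality of $\nbhd{\e}{G_p}$ in $K_p$, equivalently $\|\Ad q\|_{\mathrm{op}}=1$), and conjugate the right perturbation through $n$ (picking up $\|\Ad n\|_{\mathrm{op}}$). The divergence is in the final step. You propose to read off the Bruhat--Iwasawa decomposition of $v_1\,a\,\tilde u_2$ directly from its four matrix entries, tracking the valuation of each. The paper instead handles left and right perturbations in two separate passes and relies on a short structural lemma: for $N\geq 1$,
\[
\nbhd{p^{-N}}{G_p}=\nbhd{p^{-N}}{\leftgrp_p}\,\nbhd{p^{-N}}{N_p}
\qquad\text{and}\qquad
\nbhd{p^{-N}}{\leftgrp_p}=\nbhd{p^{-N}}{N_p^-}\,\nbhd{p^{-N}}{M_p}.
\]
This splits each small perturbation into its $N_p^-$, $M_p$, $N_p$ pieces \emph{before} anything is moved past $a$; conjugation by $a$ then acts on each unipotent piece by a single scalar (via $a^{-1}n_x a=n_{xp^{2t}}$ and $an_y^-a^{-1}=n_{yp^{2t}}^-$), while $M_p$ commutes with $a$. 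The entrywise bookkeeping you flag as the main obstacle simply does not arise: there are no cross terms and no four-entry expansions, only repeated use of these two conjugation identities. Your direct computation would go through (and would in fact reveal that the constant $c(a,n)$ needed is a bit larger than the one written in the statement, as the paper's own proof also shows), but the group-theoretic factorization is what keeps the argument short.
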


The proof of Proposition \ref{prop: Effective Iwasawa decomposition}
requires the following three Lemmas: 
\begin{lem}
\label{lem: Conjugation inflates by the norm of Ad}For $g\in\sl 2\left(\QQ_{p}\right)$,
\[
g\nbhd{p^{-N}}{}g^{-1}\subseteq\nbhd{\left\Vert \Ad g\right\Vert _{\text{op}}p^{-N}}{}
\]
where $\left\Vert \Ad g\right\Vert _{\text{op}}\in p^{\ZZ}$ is the
operator norm of conjugation by $g$. 
\end{lem}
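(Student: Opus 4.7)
The plan is direct: unwind the definition of $\nbhd{p^{-N}}{}$, apply the conjugation term by term, and track how the $p$-adic size of entries is scaled by $\Ad g$. First I would write an arbitrary $h\in\nbhd{p^{-N}}{}$ in the standard form $h=\idmat{2}+p^{N}M$ with $M\in\Mat{2}(\ZZ_{p})$, which is exactly the content of the definition $\nbhd{p^{-N}}{}=\idmat{2}+p^{N}\Mat{2}(\ZZ_{p})$ (intersected with $\sl 2(\ZZ_{p})$, forcing $\det h = 1$). A one-line computation then gives
\[
ghg^{-1}=\idmat{2}+p^{N}\,gMg^{-1}=\idmat{2}+p^{N}\,\Ad{g}(M).
\]

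The next step is to control the size of $\Ad{g}(M)$ using the operator norm of $\Ad g$ on $\Mat{2}(\QQ_{p})$ endowed with the max norm $\|\cdot\|_{\max}$ (the same norm that defines $\Mat{2}(\ZZ_{p})$ as the unit ball). Since $\|M\|_{\max}\leq 1$, one has $\|\Ad{g}(M)\|_{\max}\leq\|\Ad g\|_{\text{op}}$. Writing $\|\Ad g\|_{\text{op}}=p^{k}$ (and justifying that this value is actually in $p^{\ZZ}$, using that the supremum is attained on the finite basis of matrix units and every nonzero entry of $\Ad g(e_{ij})$ has valuation in $\ZZ$), the bound says that $p^{k}\Ad{g}(M)$ has all entries in $\ZZ_{p}$, i.e.\ $p^{k}\Ad{g}(M)\in\Mat{2}(\ZZ_{p})$. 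Substituting this back,
\[
ghg^{-1}=\idmat{2}+p^{N-k}\bigl(p^{k}\Ad{g}(M)\bigr),
\]
and the parenthesised matrix is integral, so $ghg^{-1}\in\idmat{2}+p^{N-k}\Mat{2}(\ZZ_{p})$. Combined with $\det(ghg^{-1})=\det h=1$, this places $ghg^{-1}$ in $\nbhd{p^{-(N-k)}}{}=\nbhd{\|\Ad g\|_{\text{op}}\,p^{-N}}{}$, as required.

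There is no serious obstacle here; the only point that requires a little care is confirming that the supremum defining $\|\Ad g\|_{\text{op}}$ is realised and lies in $p^{\ZZ}$ (so that $\nbhd{\|\Ad g\|_{\text{op}}\,p^{-N}}{}$ is one of the principal congruence subgroups already defined in Definition \ref{def: p-adic id neighborhoods}), together with the tacit restriction that $N\geq k$ so that $p^{N-k}\Mat{2}(\ZZ_{p})\subset\Mat{2}(\ZZ_{p})$; both are automatic once $\e=p^{-N}$ is taken small enough relative to $g$, which is exactly the regime in which Proposition \ref{prop: Effective Iwasawa decomposition} will be applied.
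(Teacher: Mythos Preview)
Your proof is correct and follows essentially the same approach as the paper: both unwind the definition $\nbhd{p^{-N}}{}=\idmat{2}+p^{N}\Mat{2}(\ZZ_{p})$, conjugate term by term, and invoke the operator-norm bound $\|\Ad{g}(M)\|\leq\|\Ad g\|_{\text{op}}\|M\|$ to land in the larger neighborhood. The paper phrases the computation at the level of sets rather than element by element, and treats the fact that $\|\Ad g\|_{\text{op}}\in p^{\ZZ}$ in a single sentence, but the argument is the same.
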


\begin{proof}
Indeed, 
\begin{align*}
g\nbhd{p^{-N}}{}g^{-1} & \subseteq g\left(\idmat 2+p^{N}\Mat 2\left(\ZZ_{p}\right)\right)g^{-1}\\
 & =\left(\idmat 2+g\cdot p^{N}\Mat 2\left(\ZZ_{p}\right)\cdot g^{-1}\right)\\
 & \subseteq\left(\idmat 2+\left\Vert \Ad g\right\Vert _{\text{op}}p^{N}\Mat 2\left(\ZZ_{p}\right)\right)=\nbhd{\left\Vert \Ad g\right\Vert _{\text{op}}p^{-N}}{}.
\end{align*}
It is clear that $\left\Vert \Ad g\right\Vert _{\text{op}}$ is a
power of $p$, since, as an operator norm, it is the maximum of norms
of ($p$-adic) vectors. 
\end{proof}

\begin{lem}
\label{claim: |Ad(k)|=00003D1}For every $k_{p}\in K_{p}$, $\left\Vert \Ad{k_{p}}\right\Vert _{\text{op}}=1$. 
\end{lem}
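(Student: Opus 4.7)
The plan is to interpret $\|\Ad{k_p}\|_{\text{op}}$ as the operator norm of conjugation acting on $\Mat 2(\QQ_p)$ equipped with the entry-wise maximum of $p$-adic absolute values; under this norm the unit ball is precisely $\Mat 2(\ZZ_p)$. This matches the convention implicit in Lemma \ref{lem: Conjugation inflates by the norm of Ad}, where $\|\Ad g\|_{\text{op}} \in p^{\ZZ}$ controls how much $g \nbhd{p^{-N}}{} g^{-1}$ can inflate. Since the operator norm takes values in $p^{\ZZ}$, it suffices to establish both $\|\Ad{k_p}\|_{\text{op}} \leq 1$ and $\|\Ad{k_p}\|_{\text{op}} \geq 1$.

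For the upper bound, I would argue directly: since $k_p \in K_p = \sl 2(\ZZ_p)$ and $K_p$ is a group, both $k_p$ and $k_p^{-1}$ have entries in $\ZZ_p$. For any $X \in \Mat 2(\ZZ_p)$, the entries of the triple product $k_p X k_p^{-1}$ are $\ZZ_p$-bilinear combinations of the entries of $X$, and hence lie in $\ZZ_p$ by the ultrametric inequality. Thus $\Ad{k_p}$ sends the unit ball into itself, giving $\|\Ad{k_p}\|_{\text{op}} \leq 1$.

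For the lower bound I would apply the same reasoning to $k_p^{-1}$ to obtain $\|\Ad{k_p^{-1}}\|_{\text{op}} \leq 1$, and then combine this with submultiplicativity of the operator norm and the identity $\Ad{k_p} \circ \Ad{k_p^{-1}} = \id$ to get
\[
1 = \|\id\|_{\text{op}} \leq \|\Ad{k_p}\|_{\text{op}} \cdot \|\Ad{k_p^{-1}}\|_{\text{op}} \leq 1,
\]
forcing equality throughout and in particular $\|\Ad{k_p}\|_{\text{op}} = 1$.

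I do not foresee a genuine obstacle; the statement is essentially the assertion that $\Ad{k_p}$ is an automorphism of the $\ZZ_p$-module $\Mat 2(\ZZ_p)$. The only point needing care is to fix the normalization of the norm on $\Mat 2(\QQ_p)$ at the outset, so that the phrase ``operator norm'' has an unambiguous meaning and the conclusion $\|\Ad{k_p}\|_{\text{op}} = 1$ is literally a statement about $\Ad{k_p}$ preserving the unit ball rather than merely some fixed lattice up to scaling.
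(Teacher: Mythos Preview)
Your argument is correct. One small remark: the entry-wise max norm you choose and the operator norm (as maps $\QQ_p^2\to\QQ_p^2$ with the max norm) in fact coincide on $\Mat 2(\QQ_p)$, so there is no ambiguity to worry about.

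The paper's proof reaches the same conclusion by a slightly different, more geometric route: it uses that $K_p$ acts by isometries on $(\QQ_p^2,\normp{\cdot})$, so that for any $T\in\Mat 2(\QQ_p)$ the substitution $y=k_p x$ gives
\[
\norm{\Ad{k_p}(T)}=\sup_{x\in\sphere 1_p}\normp{k_p^{-1}Tk_p x}=\sup_{y\in\sphere 1_p}\normp{Ty}=\norm{T},
\]
directly showing that $\Ad{k_p}$ is norm-preserving. Your approach instead verifies the lattice inclusion $k_p\,\Mat 2(\ZZ_p)\,k_p^{-1}\subseteq\Mat 2(\ZZ_p)$ and then uses the inverse to upgrade $\leq 1$ to $=1$. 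Both are short; the paper's version is perhaps more conceptual (it identifies $\Ad{k_p}$ as an isometry, not merely a contraction in each direction), while yours is a touch more self-contained since it does not invoke the norm-preserving property of the $K_p$-action on $\QQ_p^2$.
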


\begin{proof}
We know that $K_{p}\cdot\sphere 1_{p}=\sphere 1_{p}$, and that this
action preserves the $p$-adic norm. For $\mtx\in\Mat 2\left(\QQ_{p}\right)$,
we need to show that $\norm{\Ad{k_{p}}\left(T\right)}=\norm T$, where
the norm on $\Mat 2\left(\QQ_{p}\right)$ is the operator norm. Indeed
\[
\norm{\Ad{k_{p}}\left(T\right)}=\sup_{x\in\sphere 1_{p}}\normp{\Ad{k_{p}}\left(T\right)\cdot x}=\sup_{x\in\sphere 1_{p}}\normp{k_{p}^{-1}Tk_{p}\cdot x}=\sup_{y\in\sphere 1_{p}}\normp{Ty}=\norm T.
\]
Then $\Ad{k_{p}}$ is norm preserving, and therefore has operator
norm $1$. 
\end{proof}
\begin{lem}
\label{lem: inclusion of e-balls}For any $N\geq1$,
\[
\nbhd{p^{-N}}{G_{p}}=\nbhd{p^{-N}}{\leftgrp_{p}}\nbhd{p^{-N}}{N_{p}},
\]
\[
\nbhd{p^{-N}}{\leftgrp_{p}}=\nbhd{p^{-N}}{N_{p}^{-}}\nbhd{p^{-N}}{M_{p}}.
\]
\end{lem}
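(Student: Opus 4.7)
Both identities reduce to elementary matrix computations in $\Mat 2(\ZZ_p)$; the one structural fact I will lean on is that for $N\geq 1$ the set $1+p^{N}\ZZ_p$ is a multiplicative subgroup of $\ZZ_p^{\times}$, so every element has an inverse still in $1+p^{N}\ZZ_p$. Recall also that $\nbhd{p^{-N}}{}$ already lives inside $K_p=\sl 2(\ZZ_p)$, so $\nbhd{p^{-N}}{G_p}=\nbhd{p^{-N}}{}$.

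For the first identity, the inclusion $\nbhd{p^{-N}}{\leftgrp_p}\nbhd{p^{-N}}{N_p}\subseteq\nbhd{p^{-N}}{G_p}$ is immediate: writing the two factors as $\idmat 2+p^{N}T_Q$ and $\idmat 2+p^{N}T_N$ with $T_Q,T_N\in\Mat 2(\ZZ_p)$, the product equals $\idmat 2+p^{N}(T_Q+T_N+p^{N}T_Q T_N)$, which is in $\nbhd{p^{-N}}{}$ and has determinant~$1$. For the reverse inclusion, I would start from an arbitrary
\[
g=\begin{pmatrix}1+p^{N}a & p^{N}b\\ p^{N}c & 1+p^{N}d\end{pmatrix}\in\nbhd{p^{-N}}{G_p},
\]
set $u:=1+p^{N}a\in\ZZ_p^{\times}$, and propose the explicit factorization $g=Q\cdot n$ with
\[
Q=\begin{pmatrix}u & 0\\ p^{N}c & u^{-1}\end{pmatrix}\in\nbhd{p^{-N}}{\leftgrp_p},\qquad n=\begin{pmatrix}1 & p^{N}bu^{-1}\\ 0 & 1\end{pmatrix}\in\nbhd{p^{-N}}{N_p}.
\]
The top row and the $(2,1)$-entry of $Qn$ reproduce $g$ tautologically; the $(2,2)$-entry will match because $\det g=1$ is exactly the identity $u(1+p^{N}d)=1+p^{2N}bc$.

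For the second identity, the inclusion $\supseteq$ follows by multiplying out a general pair in $\nbhd{p^{-N}}{N_p^{-}}\nbhd{p^{-N}}{M_p}$ and observing that the result has the lower-triangular shape required to lie in $\leftgrp_p$. For $\subseteq$, I would take $Q=\bigl(\begin{smallmatrix}u & 0\\ m & u^{-1}\end{smallmatrix}\bigr)\in\nbhd{p^{-N}}{\leftgrp_p}$ (so $u\in 1+p^{N}\ZZ_p$, $m\in p^{N}\ZZ_p$) and use
\[
\begin{pmatrix}u & 0\\ m & u^{-1}\end{pmatrix}=\begin{pmatrix}1 & 0\\ mu^{-1} & 1\end{pmatrix}\begin{pmatrix}u & 0\\ 0 & u^{-1}\end{pmatrix},
\]
noting that $mu^{-1}\in p^{N}\ZZ_p$ because $u\in\ZZ_p^{\times}$.

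I do not expect any real obstacle here: the lemma is precisely the statement that the decompositions $G_p^{+}=\leftgrp_p N_p$ and $\leftgrp_p=N_p^{-}M_p$ restrict nicely to the principal congruence filtration. The only point requiring care is the invertibility of $u=1+p^{N}a$, which holds as soon as $N\geq 1$ and is what makes the explicit factorizations above live inside the correct $\e$-neighborhoods.
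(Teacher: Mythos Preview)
Your proof is correct and essentially identical to the paper's: both verify the easy inclusion by multiplying out, and for the non-trivial inclusions both write down the same explicit factorizations (with $Q$ having diagonal entries $u=1+p^{N}a$ and $u^{-1}$, and $n$ with upper-right entry $p^{N}b u^{-1}$; likewise the $N_p^{-}M_p$ factorization of $Q$). The one point you single out---that $1+p^{N}\ZZ_p\subset\ZZ_p^{\times}$ for $N\geq1$---is exactly what the paper uses implicitly.
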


\begin{proof}
We note that when $N\geq1$
\[
\nbhd{p^{-N}}{\leftgrp_{p}}=\left\{ \left(\begin{matrix}1+p^{N}a & 0\\
p^{N}b & \left(1+p^{N}a\right)^{-1}
\end{matrix}\right):a,b\in\ZZ_{p}\right\} ,
\]
\[
\nbhd{p^{-N}}{N_{p}}=\left\{ \left(\begin{matrix}1 & p^{N}\ZZ_{p}\\
0 & 1
\end{matrix}\right)\right\} 
\]
and 
\[
\nbhd{p^{-N}}{M_{p}}=\left\{ \left(\begin{matrix}1+p^{N}\a & 0\\
0 & \left(1+p^{N}\a\right)^{-1}
\end{matrix}\right):\a\in\ZZ_{p}\right\} .
\]
The inclusions $\supseteq$ in the statement of the lemma are trivial.
For the opposite direction, observe that
\[
\nbhd{p^{-N}}{G_{p}}\ni\left(\begin{matrix}1+p^{N}a & p^{N}c\\
p^{N}d & 1+p^{N}b
\end{matrix}\right)=\left(\begin{matrix}1+p^{N}x & 0\\
p^{N}y & \left(1+p^{N}x\right)^{-1}
\end{matrix}\right)\left(\begin{matrix}1 & p^{N}z\\
0 & 1
\end{matrix}\right)
\]
for 
\[
\begin{matrix}x=a, & y=d, & z=\frac{c}{1+p^{N}a}\end{matrix},
\]
as $\left(1+p^{N}x\right)^{-1}=1-\frac{P^{N}x}{1+p^{N}x}$ and the
determinant of the left-hand matrix is $1$. Similarly, 
\[
\nbhd{p^{-N}}{\leftgrp_{p}}\ni\left(\begin{matrix}1+p^{N}a & 0\\
p^{N}b & \left(1+p^{N}a\right)^{-1}
\end{matrix}\right)=\left(\begin{matrix}1 & 0\\
p^{N}z & 1
\end{matrix}\right)\left(\begin{matrix}1+p^{N}x & 0\\
0 & \left(1+p^{N}x\right)^{-1}
\end{matrix}\right)
\]
for 
\[
\begin{matrix}x=a, & z=\frac{b}{1+p^{N}a}\end{matrix}.\qedhere
\]
\end{proof}
We now turn to prove Proposition \ref{prop: Effective Iwasawa decomposition}. 
\begin{proof}[Proof of Proposition \ref{prop: Effective Iwasawa decomposition}]
Let $g=qan$ with $q\in\leftgrp$, $a=\left(\begin{smallmatrix}p^{-t} & 0\\
o & p^{t}
\end{smallmatrix}\right)\in A_{p}$ and $n=\left(\begin{smallmatrix}1 & x\\
0 & 1
\end{smallmatrix}\right)\in N_{p}$. We will use the fact that 
\begin{equation}
\begin{array}{ccccccc}
a^{-1}n_{x}a & = & \left(\begin{smallmatrix}p^{t} & 0\\
o & p^{-t}
\end{smallmatrix}\right)\left(\begin{smallmatrix}1 & x\\
0 & 1
\end{smallmatrix}\right)\left(\begin{smallmatrix}p^{-t} & 0\\
o & p^{t}
\end{smallmatrix}\right) & = & \left(\begin{smallmatrix}1 & xp^{2t}\\
0 & 1
\end{smallmatrix}\right) & = & n_{xp^{2t}}\\
\\
an_{y}^{-}a^{-1} & = & \left(\begin{smallmatrix}p^{-t} & 0\\
o & p^{t}
\end{smallmatrix}\right)\left(\begin{smallmatrix}1 & 0\\
y & 1
\end{smallmatrix}\right)\left(\begin{smallmatrix}p^{t} & 0\\
o & p^{-t}
\end{smallmatrix}\right) & = & \left(\begin{smallmatrix}1 & 0\\
yp^{2t} & 1
\end{smallmatrix}\right) & = & n_{yp^{2t}}^{-}
\end{array}\label{eq: conjugation by A}
\end{equation}
where $n_{x}=\left(\begin{smallmatrix}1 & x\\
0 & 1
\end{smallmatrix}\right)\in N_{p}$, $n_{y}^{-}=\left(\begin{smallmatrix}1 & 0\\
y & 1
\end{smallmatrix}\right)\in N_{p}^{-}$ and $a=\left(\begin{smallmatrix}p^{-t} & 0\\
o & p^{t}
\end{smallmatrix}\right)$.

\paragraph*{Step 1: Left perturbations. }

Set $\e=p^{-N}$. Since $\left\Vert \Ad q\right\Vert _{\text{op}}=1$
(Lemma \ref{claim: |Ad(k)|=00003D1}), then by Lemmas \ref{lem: Conjugation inflates by the norm of Ad}
and \ref{lem: inclusion of e-balls} we have that,
\begin{align*}
\nbhd{\e}{G_{p}}qan & =q\left(q^{-1}\nbhd{\e}{G_{p}}q\right)an\subseteq q\nbhd{\e}{G_{p}}an=q\nbhd{\e}{\leftgrp_{p}}\nbhd{\e}{N_{p}}an.
\end{align*}
According to (\ref{eq: conjugation by A}),
\[
=q\nbhd{\e}{\leftgrp_{p}}a\cdot a^{-1}\nbhd{\e}{N_{p}}an=q\nbhd{\e}{\leftgrp_{p}}\cdot a\cdot\nbhd{p^{-2t}\e}{N_{p}}n.
\]

\paragraph*{Step 2: Right perturbations. }

By letting $C\left(n\right)=\left\Vert \Ad n\right\Vert _{\text{op}}\in p^{\ZZ}$,
we have according to Lemma \ref{lem: Conjugation inflates by the norm of Ad}
that 
\[
qan\nbhd{\e}{G_{p}}=qa\left(n\nbhd{\e}{G_{p}}n^{-1}\right)n\subseteq qa\nbhd{C\left(n\right)\e}{G_{p}}n;
\]
By Lemma \ref{lem: inclusion of e-balls} and to (\ref{eq: conjugation by A}),
\begin{align*}
 & \subseteq qa\cdot\nbhd{C\left(n\right)\e}{N_{p}^{-}}\nbhd{C\left(n\right)\e}{M_{p}}\nbhd{C\left(n\right)\e}{N_{p}}\cdot n\\
 & =qa\cdot\nbhd{C\left(n\right)\e}{N_{p}^{-}}a^{-1}\cdot a\nbhd{C\left(n\right)\e}{M_{p}}\nbhd{C\left(n\right)\e}{N_{p}}\cdot n\\
 & =q\nbhd{p^{-2t}C\left(n\right)\e}{N_{p}^{-}}a\cdot\nbhd{C\left(n\right)\e}{M_{p}}\nbhd{C\left(n\right)\e}{N_{p}}\cdot n.
\end{align*}
Since $A$ and $M$ commute, 
\[
=q\nbhd{p^{-2t}C\left(n\right)\e}{N_{p}^{-}}\nbhd{C\left(n\right)\e}{M_{p}}\cdot a\nbhd{C\left(n\right)\e}{N_{p}}n.
\]
By letting $C\left(a\right)=\max\left\{ p^{-2t},1\right\} $ we have
\[
\subseteq q\nbhd{C\left(a\right)C\left(n\right)\e}{N_{p}^{-}}\nbhd{C\left(a\right)C\left(n\right)\e}{M_{p}}\cdot a\nbhd{C\left(n\right)\e}{N_{p}}n.
\]
and then by Lemma \ref{lem: inclusion of e-balls}
\[
\subseteq q\nbhd{C\left(a\right)C\left(n\right)\e}{\leftgrp_{p}}\nbhd{C\left(a\right)C\left(n\right)\e}{N_{p}}a\nbhd{C\left(n\right)\e}{N_{p}}n.
\]
Finally, by (\ref{eq: conjugation by A}) 
\begin{align*}
=q\nbhd{C\left(a\right)C\left(n\right)\e}{\leftgrp_{p}}a\cdot a^{-1}\nbhd{C\left(a\right)C\left(n\right)\e}{N_{p}}a\nbhd{C\left(n\right)\e}{N_{p}}n & =q\nbhd{C\left(a\right)C\left(n\right)\e}{\leftgrp_{p}}a\nbhd{p^{-2k}C\left(a\right)C\left(n\right)\e}{N_{p}}\nbhd{C\left(n\right)\e}{N_{p}}n\\
 & \subseteq q\nbhd{C\left(a\right)^{2}C\left(n\right)\e}{\leftgrp_{p}}a\nbhd{C\left(a\right)^{2}C\left(n\right)\e}{N_{p}}n.
\end{align*}
Combining the effect of both left and right perturbations, we obtain
that 
\[
\nbhd{\e}{G_{p}}qan\nbhd{\e}{G_{p}}\in q\nbhd{c\left(a,n\right)\cdot\e}{\leftgrp_{p}}a\nbhd{c\left(a,n\right)\cdot\e}{N_{p}}n
\]
where $c\left(a,n\right)=C\left(a\right)^{2}C\left(n\right)$ is a
power of $p$ (since $C\left(a\right)$ and $C\left(n\right)$ are)
that is bounded when $t$ is bounded from below and $n$ is restricted
to a bounded set. Require that $\e<c\left(a,n\right)^{-1}$ to obtain
that $c\left(a,n\right)\e\in p^{-\NN}$. 
\end{proof}
We can now prove Proposition \ref{prop: p-adic well roundedness of sets}.
The proof essentially relies on the ultrametric nature of $\QQ_{p}$:
a small enough perturbation of a ball is the ball itself. The first
claim (\ref{eq: epsilon balls swallowed}) is the translation of this
phenomenon in our setting.
\begin{proof}[Proof of Proposition \ref{prop: p-adic well roundedness of sets}]
We first claim that for $N\geq0$ large enough and $c\e\leq p^{-N}$
we have: 
\begin{equation}
\begin{cases}
\nbhd{c\e}{N_{p}}\cdot\left(N_{p}\right)_{\a+p^{\psi}\ZZ_{p}}\subseteq\left(N_{p}\right)_{\a+p^{\psi}\ZZ_{p}} & \text{ and }\left(N_{p}\right)_{\a+p^{\psi}\ZZ_{p}}\cdot\nbhd{c\e}{N_{p}}\subseteq\left(N_{p}\right)_{\a+p^{\psi}\ZZ_{p}}\\
\nbhd{c\e}{\leftgrp_{p}}\left(\leftgrp_{p}\right)_{\arcp}\subseteq\left(\leftgrp_{p}\right)_{\arcp} & \text{ and }\left(\leftgrp_{p}\right)_{\arcp}\cdot\nbhd{c\e}{\leftgrp_{p}}\subseteq\left(\leftgrp_{p}\right)_{\arcp}.
\end{cases}.\label{eq: epsilon balls swallowed}
\end{equation}
The inclusions in the first row are a trivial computation. For the
inclusions in the second row, write $\arcp=\arcp\left(\dirp v,p^{k}\right)$
where $k\geq0$ and $\dirp v\in\sphere{1,+}_{p}$. Let $N\geq0$ such
that $\nbhd{c\e}{}=\nbhd{p^{N}}{}$, and assume that $N\geq k$. Observe
that 

\[
\nbhd{c\e}{\leftgrp_{p}}=\nbhd{p^{N}}{\leftgrp_{p}}=\left\{ \left(\begin{matrix}1+p^{N}\ZZ_{p} & 0\\
p^{N}\ZZ_{p} & *
\end{matrix}\right)\right\} .
\]
By letting $\dirp v=\left(\begin{smallmatrix}u_{1}\\
p^{\ell}u_{2}
\end{smallmatrix}\right)\in\sphere{1,+}_{p}$ where $u_{1},u_{2}\in\ZZ_{p}^{\times}$ and $\ell\geq0$, then
\begin{eqnarray*}
\left(\leftgrp_{p}\right)_{\arcp\left(\dirp v,p^{k}\right)}=\left\{ \left(\begin{matrix}\dirp v+p^{k}\ZZ_{p}^{2} & \begin{smallmatrix}0\\*\end{smallmatrix}\end{matrix}\right)\right\}  & = & \left\{ \left(\begin{matrix}u_{1}+p^{k}\ZZ_{p} & 0\\
p^{\ell}u_{2}+p^{k}\ZZ_{p} & *
\end{matrix}\right)\right\} .
\end{eqnarray*}
Take $\left(\begin{smallmatrix}u_{1}+p^{k}\a & 0\\
p^{\ell}u_{2}+p^{k}\b & \left(u_{1}+p^{k}\a\right)^{-1}
\end{smallmatrix}\right)\in\left(\leftgrp_{p}\right)_{\arcp\left(\dirp v,p^{k}\right)}$ and $\left(\begin{smallmatrix}1+p^{N}\ga & 0\\
p^{N}\dl & \left(1+p^{N}\ga\right)^{-1}
\end{smallmatrix}\right)\in\nbhd{p^{-N}}{\leftgrp_{p}}$ (here $\a,\b,\ga\in\ZZ_{p}$). Now,
\begin{align*}
\left(\begin{smallmatrix}1+p^{N}\ga & 0\\
p^{N}\dl & \left(1+p^{N}\ga\right)^{-1}
\end{smallmatrix}\right)\cdot\left(\begin{smallmatrix}u_{1}+p^{k}\a & 0\\
p^{\ell}u_{2}+p^{k}\b & \left(u_{1}+p^{k}\a\right)^{-1}
\end{smallmatrix}\right) & \overset{N\geq k}{\in}\left\{ \left(\begin{smallmatrix}u_{1}+p^{k}\ZZ_{p} & 0\\
p^{\ell}u_{2}+p^{k}\ZZ_{p} & *
\end{smallmatrix}\right)\right\} \\
 & =\left(\leftgrp_{p}\right)_{\arcp\left(\dirp v,p^{k}\right)},\\
\\
\left(\begin{smallmatrix}u_{1}+p^{k}\a & 0\\
p^{\ell}u_{2}+p^{k}\b & \left(u_{1}+p^{k}\a\right)^{-1}
\end{smallmatrix}\right)\cdot\left(\begin{smallmatrix}1+p^{N}\ga & 0\\
p^{N}\dl & \left(1+p^{N}\ga\right)^{-1}
\end{smallmatrix}\right) & \overset{N\geq k}{\in}\left(\begin{smallmatrix}u_{1}+p^{k}\ZZ_{p} & 0\\
p^{\ell}u_{2}+p^{k}\ZZ_{p} & *
\end{smallmatrix}\right)\\
 & =\left(\leftgrp_{p}\right)_{\arcp\left(\dirp v,p^{k}\right)}.
\end{align*}

Having proved the inclusions in (\ref{eq: epsilon balls swallowed}),
the statement of Proposition \ref{prop: p-adic well roundedness of sets}
follows: according to Proposition \ref{prop: Effective Iwasawa decomposition},
when $g_{p}=qan\in G_{p}^{+}$ lies in $\leftgrp_{\arcp^{+}}A_{t_{1},t_{2}}N_{\a+p^{\psi}\ZZ_{p}}$,
then 
\[
\nbhd{\e}{G_{p}}qan\nbhd{\e}{G_{p}}\subseteq q\nbhd{c\e}{\leftgrp_{p}}a\nbhd{c\e}{N_{p}}n
\]
(where $c=c\left(a,n\right)$); but then according to (\ref{eq: epsilon balls swallowed}),
this is contained in $\leftgrp_{\arcp^{+}}A_{t_{1},t_{2}}N_{\a+p^{\psi}\ZZ_{p}}$.
Thus
\[
\left(\leftgrp_{\arcp^{+}}A_{t_{1},t_{2}}N_{\a+p^{\psi}\ZZ_{p}}\right)^{+\e}\subseteq\leftgrp_{\arcp^{+}}A_{t_{1},t_{2}}N_{\a+p^{\psi}\ZZ_{p}},
\]
and the opposite inclusion is obvious. Similarly, 
\[
\leftgrp_{\arcp^{+}}A_{t_{1},t_{2}}N_{\a+p^{\psi}\ZZ_{p}}\subseteq\left(\leftgrp_{\arcp^{+}}A_{t_{1},t_{2}}N_{\a+p^{\psi}\ZZ_{p}}\right)^{-\e}
\]
and the opposite inclusion is obvious. Then 
\[
\left(\leftgrp_{\arcp^{+}}A_{t_{1},t_{2}}N_{\a+p^{\psi}\ZZ_{p}}\right)^{-\e}=\leftgrp_{\arcp^{+}}A_{t_{1},t_{2}}N_{\a+p^{\psi}\ZZ_{p}}=\left(\leftgrp_{\arcp^{+}}A_{t_{1},t_{2}}N_{\a+p^{\psi}\ZZ_{p}}\right)^{+\e},
\]
meaning that the well-roundedness condition holds trivially. 
\end{proof}

\bibliographystyle{alpha}
\bibliography{bib_p-adic_gcd}

\begin{thebibliography}{ELMV09}

\bibitem[AES16a]{AES_16B}
M.~Aka, M.~Einsiedler, and U.~Shapira.
\newblock Integer points on spheres and their orthogonal grids.
\newblock {\em Journal of the London Mathematical Society}, 93(2):143--158,
  2016.

\bibitem[AES16b]{AES_16A}
M.~Aka, M.~Einsiedler, and U.~Shapira.
\newblock Integer points on spheres and their orthogonal lattices.
\newblock {\em Inventiones mathematicae}, 206(2):379--396, 2016.

\bibitem[BHC62]{Borel_HarishChandra}
A.~Borel and Harish-Chandra.
\newblock Arithmetic subgroups of algebraic groups.
\newblock {\em Annals of mathematics}, pages 485--535, 1962.

\bibitem[BO12]{Benoist_Oh_12}
Y.~Benoist and H.~Oh.
\newblock Effective equidistribution of s-integral points on symmetric
  varieties [{\'e}quidistribution effective des points s-entiers des
  vari{\'e}t{\'e}s sym{\'e}triques].
\newblock In {\em Annales de l'institut Fourier}, volume~62, pages 1889--1942,
  2012.

\bibitem[BS05]{BS05}
J.~Bryk and C.~E. Silva.
\newblock Measurable dynamics of simple p-adic polynomials.
\newblock {\em The American Mathematical Monthly}, 112(3):212--232, 2005.

\bibitem[COU01]{COH01}
L.~Clozel, H.~Oh, and E.~Ullmo.
\newblock Hecke operators and equidistribution of hecke points.
\newblock {\em Inventiones mathematicae}, 144(2):327--351, 2001.

\bibitem[Duk03]{Duke_03}
W.~Duke.
\newblock Rational points on the sphere.
\newblock In {\em Number Theory and Modular Forms}, pages 235--239. Springer,
  2003.

\bibitem[Duk07]{Duke_07}
W.~Duke.
\newblock An introduction to the {L}innik problems.
\newblock In {\em Equidistribution in number theory, an introduction}, pages
  197--216. Springer, 2007.

\bibitem[ELMV09]{ELMV_09}
M.~Einsiedler, E.~Lindenstrauss, P.~Michel, and A.~Venkatesh.
\newblock Distribution of periodic torus orbits on homogeneous spaces.
\newblock {\em Duke Mathematical Journal}, 148(1):119--174, 2009.

\bibitem[EM93]{EM93}
A.~Eskin and C.~McMullen.
\newblock Mixing, counting and equidistribution in {L}ie groups.
\newblock {\em Duke Mathematical Jurnal}, 71(1):181--209, 1993.

\bibitem[EMSS16]{EMSS_16}
M.~Einsiedler, S.~Mozes, N.~Sha, and U.~Shapira.
\newblock Equidistribution of primitive rational points on expanding
  horospheres.
\newblock {\em Compositio Mathematica}, 152(4):667--692, 2016.

\bibitem[ERW17]{ERW17}
M.~Einsiedler, R.~R{\"u}hr, and P.~Wirth.
\newblock Distribution of shapes of orthogonal lattices.
\newblock {\em Ergodic Theory and Dynamical Systems}, pages 1--77, 2017.

\bibitem[EW13]{EW13}
M.~Einsiedler and T.~Ward.
\newblock {\em Ergodic theory (with a view towards number theory)}.
\newblock Springer, 2013.

\bibitem[GI63]{GI63}
O.~Goldman and N.~Iwahori.
\newblock The space of $\mathfrak{p}$-adic norms.
\newblock {\em Acta Mathematica}, 109(1):137--177, 1963.

\bibitem[GJ78]{GelJac78}
S.~Gelbart and H.~Jacquet.
\newblock A relation between automorphic representations of ${\rm gl}(2)$ and
  ${\rm gl}(3)$.
\newblock {\em Annales scientifiques de l'\'Ecole Normale Sup\'erieure}, Ser.
  4, 11(4):471--542, 1978.

\bibitem[GN12]{GN12}
A.~Gorodnik and A.~Nevo.
\newblock Counting lattice points.
\newblock {\em Journal f{\"u}r die reine und angewandte Mathematik},
  2012(663):127--176, 2012.

\bibitem[Goo83]{Good83}
A.~Good.
\newblock On various means involving the {F}ourier coefficients of cusp forms.
\newblock {\em Mathematische Zeitschrift}, 183(1):95--129, 1983.

\bibitem[Gui14]{GUILL2014}
A.~Guilloux.
\newblock Equidistribution in $ s $-arithmetic and adelic spaces.
\newblock In {\em Annales de la Facult{\'e} des sciences de Toulouse:
  Math{\'e}matiques}, volume~23, pages 1023--1048, 2014.

\bibitem[HK19]{HK_gcd}
T.~Horesh and Y.~Karasik.
\newblock Equidistribution of primitive vectors, and the shortest solutions to
  their gcd equations.
\newblock {\em arXiv:1903.01560}, 2019.
\newblock arXiv preprint.

\bibitem[HK20]{HK_WellRoundedness}
T.~Horesh and Y.~Karasik.
\newblock A practical guide to well roundedness.
\newblock {\em arXiv:2011.12204}, 2020.
\newblock arXiv preprint.

\bibitem[HN16]{HN_Counting}
T.~Horesh and A.~Nevo.
\newblock Horospherical coordinates of lattice points in hyperbolic space:
  effective counting and equidistribution.
\newblock {\em arXiv:1612.08215}, 2016.
\newblock arXiv preprint.

\bibitem[Kim03]{KimSar03}
H.~H. Kim.
\newblock Functoriality for the exterior square of $\textrm{GL}_4$ and the
  symmetric fourth of $\textrm{GL}_2$. {W}ith an appendix by {D}.
  {R}amakrishnan, and an appendix co-authored by {P}. {S}arnak.
\newblock {\em Journal of the American Mathematical Society}, 16(1):139--183,
  2003.

\bibitem[Lin68]{Linnik_68}
Y.~V. Linnik.
\newblock {\em Ergodic Properties of Algebraic Fields}, volume~45 of {\em
  Ergebnisse der Mathematik und ihrer Grenzgebiete}.
\newblock Springer-Verlag Berlin Heidelberg, 1968.

\bibitem[Mar10]{Marklof_10}
J.~Marklof.
\newblock The asymptotic distribution of frobenius numbers.
\newblock {\em Inventiones mathematicae}, 181(1):179--207, 2010.

\bibitem[MV15]{Mar_Vino_15}
J.~Marklof and I.~Vinogradov.
\newblock Directions in hyperbolic lattices.
\newblock {\em Journal f{\"u}r die reine und angewandte Mathematik (Crelles
  Journal)}, 2015.

\bibitem[PR92]{PPR_92}
V.P. Platonov and A.S. Rapinchuk.
\newblock Algebraic groups and number theory.
\newblock {\em Russian Mathematical Surveys}, 47(2):133, 1992.

\bibitem[RR09]{RR09}
M.~Risager and Z.~Rudnick.
\newblock On the statistics of the minimal solution of a linear diophantine
  equation and uniform distribution of the real part of orbits in hyperbolic
  spaces.
\newblock {\em Contemporary Mathematics}, 484:187--194, 2009.

\bibitem[Sch98]{Schmidt_98}
W.~M. Schmidt.
\newblock The distribution of sub-lattices of {$Z^m$}.
\newblock {\em Monatshefte f{\"u}r Mathematik}, 125:37--81, 1998.

\bibitem[Tru13]{Tru13}
J.L. Truelsen.
\newblock Effective equidistribution of the real part of orbits on hyperbolic
  surfaces.
\newblock In {\em Proceedings of the American Mathematical Society}, volume
  141(2), pages 505--514, 2013.

\end{thebibliography}

\end{document}